\begin{document}


\title{Manifold calculus and homotopy sheaves}%
\author{Pedro Boavida de Brito and Michael Weiss}%
\address{Mathematisches Institut, Universit\"{a}t M\"{u}nster, Einsteinstr. 62, D-48149 M\"{u}nster}
\email{p.boavida@uni-muenster.de}
\email{m.weiss@uni-muenster.de}
\thanks{The first author was supported by FCT, Funda\c{c}\~{a}o para a Ci\^{e}ncia e a Tecnologia, grant SFRH/BD/61499/2009.\\
The second author was supported through a Humboldt professorship.}

\begin{abstract}
Manifold calculus is a form of functor calculus concerned with contravariant functors from some category of manifolds to spaces. A weakness in the original formulation is that it is not continuous in the sense that it does not handle the natural enrichments well. In this paper, we correct this by defining an enriched version of manifold calculus which essentially extends the discrete setting. Along the way, we recast the Taylor tower as a tower of homotopy sheafifications. As a spin-off we obtain a natural connection to operads: the limit of the Taylor tower is a certain (derived) space of right module maps over the framed little discs operad.
\end{abstract}
\maketitle


\section{Introduction}

\quad Let $M$ be a smooth manifold without boundary and denote by $\sO(M)$ the poset of open subsets of $M$, ordered by inclusion. Manifold calculus, as defined in \cite{embI}, is a way to study (say, the homotopy type of) contravariant functors $F$ from $\sO(M)$ to spaces which take isotopy equivalences to (weak) homotopy equivalences. In essence, it associates to such a functor a tower - called the \emph{Taylor tower} - of polynomial approximations which in good cases converges to the original functor, very much like the approximation of a function by its Taylor series. \\ 

\quad The remarkable fact, which is where the geometry of manifolds comes in, is that the Taylor tower can be explicitly constructed: the $k^{th}$ Taylor polynomial of a functor $F$ is a functor $T_k F$ which is in some sense the universal approximation to $F$ with respect to the subposet of $\sO(M)$ consisting of open sets diffeomorphic to $k$ or fewer open balls.\\ 

\quad A weakness in the traditional discrete approach is that in cases where $F$ has obvious continuity properties, $T_k F$ does not obviously inherit them, where by continuous we mean enriched over spaces. For example, let $F(U)$ for $U\in \sO(M)$ be the space of smooth embeddings from $U$ to a fixed smooth manifold $N$. It is clear that the group of diffeomorphisms $M\to M$ acts in a continuous manner on $F(M)$. One would expect a similar continuous action of the same group on $T_kF(M)$, for all $k$. But with the standard description of $T_kF$ we only get an action of the underlying discrete group. As a solution to this problem in the particular case of the embedding functor a continuous model, Haefliger-style, was proposed in \cite{GKW}.  \\

\quad In this paper, we correct this lack of continuity by defining an enriched (or $\infty$) version of manifold calculus. Along the way, we reapproach the foundations of the theory by focusing on the wider notion of homotopy sheaves rather than on polynomial functors which had the central role in \cite{embI}. \\

\quad We now give a brief overview of the paper. Let $\sS$ be a category of \emph{spaces}, i.e. compactly generated Hausdorff spaces or simplicial sets (more on this at the end of the introduction). To have an enriched setting we replace the category $\sO(M)$ by the topological category $\Man$ of smooth manifolds of a fixed dimension $d$ and (codimension zero) embeddings. We then want to consider contravariant functors which are enriched over $\sS$, namely functors $F : \Man^{op} \rightarrow \sS$ inducing \emph{continuous} (or simplicial) maps
\[
\emb(M,N) \longrightarrow \Hom_{\sS}(F(N), F(M))
\]
which preserve the units and composition. 

\quad Moreover, there is the usual Grothendieck topology $\sJ_1$ on $\Man$ given by open covers. For each positive $k$, we can define a multi-local version of $\sJ_1$ where we only admit covers which have the property that every set of $k$ (or fewer) points is contained in some open set of the cover. These form the Grothendieck topologies $\sJ_k$. 
Since $\Man$ is now viewed as a site, we refer to $\sS$-functors on $\Man$ as $\sS$-enriched presheaves, or simply \emph{presheaves}.

\begin{defn}
The \textbf{Taylor tower} of $F$ is the \emph{tower of homotopy sheafifications} of $F$ with respect to the Grothendieck topologies $\sJ_k$.
\end{defn}

\quad For the precise meaning of this statement, see section \ref{section1b}. The enriched analogue of $T_k F$ is an $\sS$-enriched presheaf denoted by $\sT_k F$ (Definition \ref{TkF}). It is the best homotopical approximation to $F$ with respect to the subcategories $\Dk$ of $\Man$ whose objects are disjoint unions of $k$ or fewer balls, and it comes with a natural `evaluation' map 
\begin{equation}\label{TA}
F \rightarrow \sT_k F
\end{equation}

One of the main results of this paper is
\begin{thm} 
The map (\ref{TA}) is a homotopy $\sJ_k$-sheafification.
\end{thm}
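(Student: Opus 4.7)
The statement is that $F \to \sT_k F$ exhibits $\sT_k F$ as the universal homotopy $\sJ_k$-sheaf under $F$. Using a standard Bousfield localization characterization, this reduces to two assertions: (i) $\sT_k F$ is a homotopy $\sJ_k$-sheaf, and (ii) the map $F \to \sT_k F$ is a $\sJ_k$-local equivalence, equivalently, for every homotopy $\sJ_k$-sheaf $G$ the induced morphism $\Map(\sT_k F, G) \to \Map(F, G)$ is a weak equivalence. My plan is to verify these two points by unpacking $\sT_k F$ as an enriched homotopy right Kan extension, $\sT_k F \simeq \iota_* \iota^* F$, along the fully faithful inclusion $\iota : \sE_k \hookrightarrow \sE$ of the full subcategory spanned by manifolds diffeomorphic to a disjoint union of at most $k$ open balls; concretely, $\sT_k F(M) \simeq \holimsub{V \in \sE_k/M} F(V)$.

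For (ii), the key observation is that $F \to \sT_k F$ is already an equivalence on every object $V \in \sE_k$: since $\iota$ is fully faithful, the unit of the derived adjunction $\iota^* \dashv \iota_*$ restricts to an equivalence on $\sE_k$. Since the objects of $\sE_k$ generate the Grothendieck topology $\sJ_k$ (every manifold admits a $\sJ_k$-cover by disjoint unions of at most $k$ balls) and since a homotopy $\sJ_k$-sheaf is determined up to equivalence by its restriction to such a basis, any morphism of presheaves which is an equivalence on $\sE_k$ is automatically a $\sJ_k$-local equivalence. For (i), given a $\sJ_k$-cover $\{U_\alpha\}$ of $M$, the defining feature of $\sJ_k$ is that every embedding $V \hookrightarrow M$ with $V \in \sE_k$ factors through some $U_\alpha$. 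A cofinality or Grothendieck-construction argument should then identify the indexing category $\sE_k/M$ with a homotopy colimit, along the \v{C}ech nerve of $\{U_\alpha\}$, of the indexing categories $\sE_k/U_{\alpha_0 \cdots \alpha_n}$; swapping $\holim$ and this colimit converts the defining formula into the \v{C}ech descent formula, establishing the sheaf property.

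The principal obstacle I anticipate is step (i): producing the cofinality equivalence of indexing categories in the enriched (continuous) setting, where morphism spaces of embeddings carry nontrivial topology that has to be coherently decomposed along the cover and matched with the enriched \v{C}ech nerve of $\{U_\alpha\}$. A subsidiary concern is to ensure that the enriched right Kan extension defining $\sT_k F$ is well-behaved with respect to size (as $\sE_k$ sits inside the large category $\sE$) and interacts properly with the chosen enriched model structure on $\sS$-presheaves. Once these categorical inputs are in hand, both (i) and (ii) should reduce to largely formal manipulations of enriched homotopy limits.
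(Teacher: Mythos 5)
Your two-step plan --- (i) $\sT_k F$ is a homotopy $\sJ_k$-sheaf, (ii) $F \to \sT_k F$ is a $\sJ_k$-local equivalence --- matches the structure of the paper's proof (Theorems \ref{Tksheaf} and \ref{sheafification}). For (ii), what you write is essentially the paper's argument compressed: the unit restricts to an equivalence on $\sE_k$ by full faithfulness of $\iota$, and the passage from ``equivalence on $\sE_k$'' to ``$\sJ_k$-local equivalence'' is exactly the universal property of the enriched homotopy right Kan extension (the paper's equation (\ref{Tk_univ})), combined with the fact that $Z \simeq \sT_k Z$ for a sheaf $Z$. Be aware that this last fact is not free: it is the paper's Theorem \ref{sheafequalsTk}, which itself requires a good-covers argument (Proposition \ref{admits_good}), so your appeal to ``a homotopy $\sJ_k$-sheaf is determined by its restriction to $\sE_k$'' is importing that entire lemma.

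For (i), your route genuinely diverges from the paper's and also contains a wrong claim. You assert that every embedding $V \hookrightarrow M$ with $V \in \sE_k$ factors through some $U_\alpha$. This is false: a $\sJ_k$-cover only guarantees that every set of $\leq k$ \emph{points} lies in some $U_\alpha$; the image of a disjoint union of $k$ possibly large balls need not fit into any single $U_\alpha$. What is true, and what the paper uses, is that $\sE(V,M)$ and $\sE(V,U_S)$ are weakly equivalent to framed configuration spaces of $j = \pi_0(V)$ points, so the weak homotopy type sees only where $j$ points go. From this the paper concludes directly that $\hocolimsub{S} \sE(-,U_S) \to \sE(-,M)$ is an objectwise weak equivalence of presheaves \emph{on $\sE_k$}, after which the sheaf property for $\sT_k F$ follows formally from preservation of weak equivalences and hocolims by derived $\Hom$. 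Your alternative --- identifying $\sE_k/M$ with a \v{C}ech-style homotopy colimit of the $\sE_k/U_{\alpha_0\cdots\alpha_n}$ via a Grothendieck construction and cofinality --- would need to be formulated at the level of topologically enriched comma categories, not ordinary categories, precisely because the factorization claim fails and only the spaces $\sE(V,-)$ (not individual embeddings) decompose along the cover. The paper's configuration-space observation is the device that makes this tractable, and you would need it (or an equivalent) to complete your cofinality argument; so in effect the two approaches converge once the details are filled in, but the paper's is shorter because it avoids the detour through indexing categories.
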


As a byproduct, we obtain a very natural connection to operads,
\begin{equation*}\label{mods}
\sT_{\infty}ÊF (M) \simeq \mathbb{R}\Hom_{P}(\emb_M, F)
\end{equation*}
where the right hand side is the derived space of right module maps over the framed little \mbox{$d$-dim} discs operad $P$, and $\emb_M$ and $F$ are the right $P$-modules defined by $\{\emb(\amalg_n \RR^d, M)\}_{n \geq 0}$ and $\{F(\amalg_n \RR^d)\}_{n \geq 0}$, respectively.  This answers a conjecture of Greg Arone and Victor Turchin (Conjecture 4.14, \cite{AroneTurchin}).

\quad In the case where $F$ is the embedding functor $\textup{Emb}(-,N)$ and $\textup{dim} N - d \geq 3$ we get, as an immediate corollary of Goodwillie-Klein excision estimates, that
\[
\emb(M,N) \simeq \mathbb{R}\Hom_{P}(\emb_M, \emb_N)
\]

\quad A discrete version of this connection to operads appeared recently in the work of Arone and Turchin [ibid.] where, coupled with formality results, it is further used to obtain explicit descriptions of the rational homology and homotopy of certain spaces of embeddings.\\

\quad Finally, we point out that the framework in this paper is rather general and can be applied to other categories other than $\Man$. Namely, for a topological (or $\infty$) category $\sC$ equipped with a Grothendieck topology possessing \emph{good covers} and, given a presheaf $F$ on $\sC$, one can construct the tower of homotopy sheafifications of $F$ - its Taylor Tower - and give an explicit model for it as a tower of homotopical approximations with respect to certain subcategories of $\sC$. Examples include the category of topological spaces, the category of $d$-dimensional manifolds with boundary, the category of all manifolds, and the analogous versions where instead of smooth manifolds one considers topological manifolds.

\subsection*{Outline of the paper}
In section 2 we define homotopy sheaves. We relax the definition of a Grothendieck topology to that of a coverage and we introduce two coverages $\sJ_k^{\circ}$ and $\sJ_k^{h}$. We show in section 5 and 7, respectively, that $\sJ_k^{\circ}$ and $\sJ_k^{h}$ form a basis for the Grothendieck topology $\sJ_k$ by proving that the three coverages generate the same homotopy sheaves. To set the ground, we first introduce the local model structure on the category of presheaves in section 3 and, in section 4, we discuss enriched homotopical (or $\infty$) Kan extensions. Finally, in section 8 we show that $\sT_k$ is really an `enrichment' of $T_k$. Specifically, we show that for functors $F$ on $\sO(M)$ which, like $\emb(-,N)$, factor through 
$\Man$, we have a weak equivalence
\[ T_k F (U) \simeq \sT_k F (U) \]
for every open set $U$ of $M$.

\subsection*{Spaces, enrichments and notation. }
We do not want to be very imposing on which category of spaces we work with. However, we need it to be cartesian closed, considered as enriched over itself and having small limits and colimits. The category of compactly generated Hausdorff spaces is a natural candidate and the one we opt for, but everything can easily be formulated simplicially (see Appendix). We denote this category of spaces by $\sS$.
To make $\sS$ enriched over itself give the $\Hom$-sets, $\Hom_{\sS}(X,Y)$, the (Kelleyfication of the) weak\footnote{The same would not be true if we had taken, for instance, the strong topology.} topology  (compact-open topology).

\quad Similarly, the category $\Man$ of $d$-dimensional manifolds without boundary is enriched over $\sS$: give the $C^{\infty}$ weak topology to the space of smooth embeddings $\emb(M,N)$. One important property of the weak topology is that it is metrizable, hence compactly generated and Hausdorff.

\quad All manifolds in this paper are assumed to be paracompact and, except in section 9, \textbf{without boundary}.

\subsection*{Acknowledgements}
We would like to thank Victor Turchin for numerous helpful comments and corrections. We also thank Assaf Libman and Mike Shulman for interesting discussions on Kan extensions and higher categories, and Dimitri Zaganidis for a correction. Last, but not least, it is a pleasure to acknowledge the influence of the writings of Dan Dugger on this paper.


\section{Homotopy sheaves}\label{section1}

\begin{defn}Let $\sC$ be a (small) category. A \emph{coverage} $\tau$ is an assignment to each object $X \in \sC$ of a set $Cov_{\tau}(X)$ of collections of objects in the overcategory $\sC \downarrow X$ subject to the following condition: 
Given $\sU := \{U_i \rightarrow X\}_{i \in I}$ in $Cov_{\tau}(X)$ and a finite subset $S:=\{i_0, \dots, i_n\}$ of $I$, the iterated pullback $U_S := U_{i_0} \times_X \dots \times_X U_{i_n}$ exists in $\sC$.

\quad An element $\sU \in Cov_{\tau}(X)$ is called a \emph{covering} of $X$.
\end{defn}

\quad Let $(\sC, \tau)$ be a simplicial or topological (i.e. $\sS$-enriched) category equipped with a coverage $\tau$. We denote by $\PSh(\sC)$ the category of simplicial or topological presheaves on $\sC$ (i.e. $\sS$-enriched functors $\sC^{op} \rightarrow \sS$). Since we will mostly be dealing with $\sS$-enriched objects, we will often drop the adjective `enriched'.

\begin{defn}\label{descent} A presheaf $F \in \PSh(\sC)$ is said to satisfy \emph{descent} for a covering $\sU := \{U_i \rightarrow X\}_{i \in I}$ in $\tau$ if the natural map
\[
F(X) \longrightarrow \holimsub{S \subset I} F(U_S)
\]
is a weak equivalence. The homotopy limit ranges over all non-empty, finite subsets $S$ of $I$ and, for $S= \{i_0, \dots, i_n\}$, the object $U_S$ is the iterated pullback  $U_{i_0} \times_X \dots \times_X U_{i_n}$.

\quad A presheaf $F$ is a \emph{homotopy $\tau$-sheaf} (or satisfies $\tau$-descent) if it satisfies descent for every covering in $\tau$.
\end{defn}

\begin{rem} 
A presheaf $F$ is said to satisfy \v{C}ech descent for a covering $\sU := \{U_i \rightarrow X\}_{i \in I}$ if the natural map 
\[
F(X) \longrightarrow \holimsub{[n] \in \Delta} \prod_{i_0, \dots, i_n} F(U_{\{i_0, \dots, i_n\}})
\]
is a weak equivalence. This definition is equivalent to our definition \ref{descent} by cofinality (\cite{BK}, p317).
\end{rem}

\subsection{Coverages on the category of manifolds} 
Fix $d \geq 0$ once and for all. Let $\Man$ be the category of $d$-dimensional smooth manifolds and codimension zero embeddings. To ensure we have a small category, we consider its objects to be $d$-dimensional smooth submanifolds of $\RR^{\infty}$. 

\quad Since $\Man$ has pullbacks (which are given by intersection), the condition defining a coverage on this category is vacuous. Manifold calculus provides us with two standard examples of coverages.

\begin{defn}[Coverage $\sJ_k$]  The coverings of $M$ in $\sJ_k$ are given by the collection of morphisms in $\Man$ of the form
\[
\{ f_i : U_i \rightarrow M \}_{i \in I}
\]
such that every set of $k$ or fewer points is contained in $U_i$ for some $i \in I$. These are called $k$-\emph{covers}.
\end{defn}

Clearly, a $1$-cover is the usual notion of an open cover of a manifold.

\begin{defn}[Coverage $\sJ_k^{h}$] The coverings of $M$ in $\sJ_k^h$ are given by the collection of morphisms in $\Man$ of the form
\[
\{ f_i : M \backslash A_i \rightarrow M \}_{i \in \{0,\dots, k\}}
\]
where $A_0, \dots, A_k$ are disjoint closed subsets of $M$.
\end{defn}

\quad Furthermore, we declare the set of coverings of the empty set $Cov(\varnothing)$ for all our coverages consists of a single element $\{ id: \varnothing \rightarrow \varnothing \}$. Notice that if we included the empty collection of morphisms in the set of coverings of the empty set, then our sheaves would have the property that $F(\varnothing)$ is contractible.

\begin{rem} A homotopy sheaf for $\sJ_k^{h}$ is usually called a \emph{polynomial functor} of degree $\leq k$.  
\end{rem}

\subsection{Generalised good covers}
\begin{defn}\label{Ek}
Define the full subcategory $\Dk$ of $\Man$ by
\[
Ob(\Dk) := \Bigl\{ \mbox{manifolds diffeomorphic to $\underset{j \leq k}{\coprod} \RR^d$ for some $0 \leq j \leq k$Ê} \Bigr\}
\]
\end{defn}
\begin{rem}
The empty set $\varnothing$ is also an object of $\Dk$ (this is, by convention, the case $j = 0$).
\end{rem}

\quad It was realised long ago that every manifold $M$ admits a covering $\{U_i \rightarrow M\}$ such that all finite intersections belong to $\D_1$ (see, for instance, \cite{BottTu}, Theorem 5.1). In other words, every manifold can be covered by open balls $\{U_i\}$ such that every finite non-empty intersection $U_{i_0} \cap \dots \cap U_{i_n}$ is again diffeomorphic to an open ball. These are usually called \textit{good covers}. Good covers clearly define a coverage $\sJ_1^{\circ}$ on $\Man$.

\begin{defn}\label{goodkcover} A cover $\{U_i \rightarrow M\}_{i \in I}$ of a manifold $M$ is called a  \emph{good $k$-cover} if 
\begin{enumerate}
\item every set of $k$ or fewer points is contained in $U_i$ for some $i$ in $I$
\item every finite intersection  $U_{i_0} \cap \dots \cap U_{i_n}$ belongs to $\Dk$
\end{enumerate}
\end{defn}
\quad A good  $1$-cover is simply a good cover. The multi-local analogue of the paragraph above is


\begin{prop}\label{admits_good}
Every manifold $M$ admits a good $k$-cover.
\end{prop}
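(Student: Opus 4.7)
The plan is to construct the cover explicitly using Riemannian geometry, with a scale-separated family of pairwise disjoint strongly convex geodesic balls.

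First I would fix a Riemannian metric $g$ on $M$ (which exists by paracompactness) together with a continuous function $\rho \colon M \to (0, \infty)$ bounded above by the convexity radius, so that any ball $B_g(x, r)$ with $r < \rho(x)$ is strongly convex and diffeomorphic to $\RR^d$; a finite intersection of strongly convex open sets is again strongly convex, hence in $\sE_1$ when non-empty. For each integer $n \geq 1$ I would then choose a locally finite subset $\Lambda_n$ of the space of configurations $\bar{x} = (x_1, \dots, x_j)$ in $M$ with $j \leq k$, subject to: pairwise distances $\geq 5 \cdot 2^{-n}$; $2^{-n} < \rho(x_i)$ for all $i$; and $\Lambda_n$ is $2^{-n-1}$-dense among configurations of this scale (such $\Lambda_n$ exist by paracompactness of configuration spaces). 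For $\bar{x} \in \Lambda_n$ set
\[
U(\bar{x}, n) := B_g(x_1, 2^{-n}) \sqcup \cdots \sqcup B_g(x_j, 2^{-n}) \in \sE_k,
\]
and define $\sU := \bigcup_n \{U(\bar{x}, n) : \bar{x} \in \Lambda_n\}$.

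Condition (1) of Definition \ref{goodkcover} is then routine: for any distinct $y_1, \dots, y_m$ with $m \leq k$, choose $n$ large enough that $5 \cdot 2^{-n} \leq \min_{i \neq \ell} d(y_i, y_\ell)$ and $2^{-n} < \rho(y_i)$; by density of $\Lambda_n$ one obtains $\bar{x} \in \Lambda_n$ with $d(y_i, x_i) < 2^{-n-1}$, so that $\{y_1, \dots, y_m\} \subset U(\bar{x}, n)$.

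The crux is condition (2). A finite intersection of elements of $\sU$ distributes as a disjoint union of intersections of strongly convex balls, each itself convex and so in $\sE_1$ when non-empty. Ordering the relevant scales as $n_0 \leq \dots \leq n_r$, any ball in $U(\bar{x}_r, n_r)$ has diameter at most $2 \cdot 2^{-n_r}$, while any two distinct balls in $U(\bar{x}_s, n_s)$ with $s \leq r$ have mutual distance at least $5 \cdot 2^{-n_s} - 2 \cdot 2^{-n_s} = 3 \cdot 2^{-n_s} \geq 3 \cdot 2^{-n_r}$. Hence each ball in the finest element meets at most one ball in each coarser element, and the total number of non-empty components in the intersection is bounded by the number of components of $U(\bar{x}_r, n_r)$, which is at most $k$. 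Therefore the intersection lies in $\sE_k$.

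The main obstacle will be producing the nets $\Lambda_n$ with the right combination of local finiteness, separation and density on a general (paracompact, possibly non-compact) manifold. Once this scale-calibrated skeleton is in place, conditions (1) and (2) reduce to the elementary metric estimates sketched above; the essential multi-local feature — that finite intersections have at most $k$ components, and not the naive $k^n$ — is forced by the separation-much-greater-than-diameter discipline imposed at each fixed scale.
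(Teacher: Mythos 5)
Your proof follows essentially the same strategy as the paper's: build the cover from scale-calibrated disjoint unions of small geodesically convex balls whose mutual separation dominates their diameter, then bound the number of components of a finite intersection by observing that each ball of the finest-scale member meets at most one ball of every coarser member, so that the components are parametrized injectively by $\pi_0$ of that finest member. The ``main obstacle'' you flag is not one — Definition \ref{goodkcover} does not require local finiteness, so the nets $\Lambda_n$ can be dropped entirely in favour of \emph{all} configurations at scale $n$; this is exactly what the paper does, indexing its cover by all $k$-tuples of distinct points and using the separation-to-diameter ratio $d_x/100$ in place of your discretized $2^{-n}$ levels, with a globally positive convexity radius (by \cite{conv}) in place of your function $\rho$.
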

\begin{proof}

Equip $M$ with a Riemannian metric, which we may take to be complete. Then there is, between any two points $x$ and $y$ of $M$, a (non-necessarily unique) geodesic from $x$ to $y$ of minimal length (corollary of Hopf-Rinow theorem). Recall that a subset $V$ of $M$ is geodesically convex set if for distinct points $x$ and $y$ in $V$ there exists a unique minimal geodesic segment connecting $x$ and $y$, and that unique segment is contained in $V$. For every $x$ in $M$ and $\epsilon > 0$, there exists an open subset $V$ of $M$ which is geodesically convex, has diameter less than $\epsilon$ and contains $x$ (the diameter is the supremum of the lengths of any minimal geodesic segment in $V$). 

\quad Let $U$ be an open subset of $M$. Let us say that $U$ is $k$-\emph{good} if it has not more than $k$ path components, if there exists $\epsilon > 0$ such that each path component of $U$ is geodesically convex and of diameter less than $\epsilon$, and the (geodesic) distance between any two points in distinct path components is at least $100 \epsilon$, say. The collection of all $k$-good subsets of $M$ forms a good $k$-cover of $M$. This follows from the next lemma.
\end{proof}

\begin{lem}
Suppose open subsets $U, V \subset M$ are $k$-good. Then $U \cap V$ is also $k$-good.
\end{lem} 
\begin{proof}
Choose $\epsilon_1$ which works for $U$ and $\epsilon_2$ which works for $V$. Without loss of generality, $\epsilon_1$ is less than or equal to $\epsilon_2$. Since the intersection of two geodesically convex open subsets of $M$ is a geodesically convex open subset of $M$, the components of $U \cap V$ are open, geodesically convex and of diameter less than $\epsilon_1$. 

\quad To see that there are at most $k$ components, we show that the map from $\pi_0(U \cap V)$ to $\pi_0(U)$ induced by the inclusion is injective. Suppose not. Then there exist two distinct path components of $V$ which make a nonempty intersection with a single path component of $U$. It follows that there are points $x$, $y$ in those two distinct path components of $V$ whose geodesic distance is less than $\epsilon_1$, and therefore also less than $\epsilon_2$. This contradicts our assumptions on $V$. The above argument also shows that the distance between any two points $x$, $y$ in distinct components of $U \cap V$ is at least $100 \epsilon_1$. Therefore $\epsilon_1$ works for $U \cap V$.
\end{proof}

\begin{defn}[Coverage $\sJ_k^{\circ}$] The coverings of $M$ in $\sJ_k^{\circ}$ are the good $k$-covers of $M$.
\end{defn}

\begin{rem}
The coverage $\sJ_k$ satisfies the required axioms to be called a Grothendieck topology. The coverages $\sJ_k^{h}$ and $\sJ_k^{\circ}$ do not, and the Grothendieck topologies which they generate are too rigid to be interesting. Instead, we show in section \ref{section3} and \ref{section4} that, as \emph{homotopy} (or $\infty$) Grothendieck topologies, these coverages do generate $\sJ_k$. We approach this by proving that the three coverages define the same homotopy sheaves. We shall not need any particular prerequisites on homotopy topos theory, but we would like to suggest the reader interested in that connection to consult the works of To{\"e}n and Vezzosi \cite{toen}, Rezk, Simpson and Lurie \cite{HTT}.
\end{rem}

\section{Homotopy sheafification}\label{section1b}

\subsubsection{Projective model structure} The category $\PSh(\sC)$ of presheaves on $\sC$ has a topological or simplicial model structure, the so-called projective model structure, where weak equivalences and fibrations are determined objectwise\footnote{Meaning that a map of presheaves $F \rightarrow G$ is said to be an objectwise equivalence (resp. objectwise fibration) if the maps $F(M) \rightarrow G(M)$ are weak equivalences (resp. fibrations) in $\sS$, for each $M \in \sC$.} and cofibrations by a right lifting property with respect to acyclic fibrations. With this structure, 
\begin{enumerate}
\item every presheaf is fibrant (since every object in $\sS$ is fibrant).
\item every representable presheaf is cofibrant. This follows from the enriched Yoneda Lemma, which states that the natural map
\[
\Hom_{\PSh(\sC)}(\sC(-,X), F) \xrightarrow{\cong} F(X)
\] 
is a homeomorphism.
\end{enumerate}

\begin{defn} The derived morphism space is the right derived functor of $\Hom$, 
\[
\mathbb{R}\Hom_{\PSh(\sC)}(X,Y) = \Hom_{\PSh(\sC)}(QX ,Y) \in \sS
\]
where $Q$ denotes a cofibrant replacement functor on $\PSh(\sC)$ with the projective model structure. \end{defn}

\begin{rem}
The usual caveat applies here: if $\sS$ is chosen to be the category of simplicial sets, then we do need to take an objectwise fibrant replacement of $Y$. Since we are working with topological spaces in mind (and every space is fibrant) this is not needed here. See the appendix for further details.
\end{rem}

\subsubsection{Local model structure} 

 Homotopy $\tau$-sheaves are the `local' objects with respect to the maps of presheaves
\begin{equation}\label{cones}
\hocolimsub{S \subset I} \sC(-,U_S) \rightarrow \sC(-,M)
\end{equation}
for each covering $\sU := \{U_i \rightarrow M\}_{i \in I}$ in $\tau$. More precisely, 

\begin{prop} 
Homotopy $\tau$-sheaves are the presheaves $F$ for which the map
\[
\RR \Hom_{\PSh(\sC)}(\sC(-,M),F) \overset{\simeq}{\longrightarrow} \RR \Hom_{\PSh(\sC)}(\hocolimsub{S \subset I} \sC(-,U_S), F)
\]
is a weak equivalence for each covering $\sU := \{U_i \rightarrow M\}_{i \in I}$ in $\tau$.
\end{prop}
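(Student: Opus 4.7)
The plan is to reduce both sides of the displayed equivalence to their elementary Yoneda descriptions, so that the map in question is identified with the descent map $F(M)\to \holim_{S\subset I}F(U_S)$; the proposition then becomes tautological.

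First I would note that the representable $\sC(-,M)$ is cofibrant in the projective model structure on $\sF$, and $F$ is fibrant (every presheaf is fibrant, as recorded just above). Hence by the enriched Yoneda lemma
\[
\RR\Hom_{\sF}(\sC(-,M),F)\;\simeq\;\Hom_{\sF}(\sC(-,M),F)\;\cong\;F(M).
\]
The same Yoneda identification gives $\Hom_{\sF}(\sC(-,U_S),F)\cong F(U_S)$ for every finite $S\subset I$.

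Next I would handle the homotopy colimit side. Since each $\sC(-,U_S)$ is cofibrant, the hocolim $\hocolim_{S\subset I}\sC(-,U_S)$ (constructed, for instance, via the standard bar/Bousfield--Kan formula in the $\sS$-enriched presheaf category) is itself cofibrant; in particular it computes the derived mapping space out of it, so
\[
\RR\Hom_{\sF}\bigl(\hocolimsub{S\subset I}\sC(-,U_S),F\bigr)\;\simeq\;\Hom_{\sF}\bigl(\hocolimsub{S\subset I}\sC(-,U_S),F\bigr).
\]
Because $\Hom_{\sF}(-,F)$ is a right $\sS$-enriched functor from $\sF^{op}$ to $\sS$ and converts enriched (homotopy) colimits to (homotopy) limits when applied to a fibrant target, this receives a natural weak equivalence from
\[
\holimsub{S\subset I}\Hom_{\sF}(\sC(-,U_S),F)\;\simeq\;\holimsub{S\subset I}F(U_S),
\]
where the last step uses the Yoneda identification objectwise. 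Combining the two identifications, the map in the proposition becomes the canonical descent map $F(M)\to \holim_{S\subset I}F(U_S)$, up to natural weak equivalence, and everything is functorial/natural in $M$ and in $\sU$.

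Finally I would conclude: by Definition \ref{descent}, $F$ is a homotopy $\tau$-sheaf if and only if this descent map is a weak equivalence for every covering $\sU=\{U_i\to M\}_{i\in I}$ in $\tau$, which is exactly the condition the proposition asks to hold. The main potential obstacle is the second step, namely making precise that the hocolim of cofibrant representables is cofibrant and that $\Hom_{\sF}(-,F)$ converts hocolims to holims in the enriched setting; both are standard once one fixes the Bousfield--Kan model for the hocolim and invokes the pushout-product/SM7 axiom for the projective model structure, combined with the fibrancy of $F$ objectwise. Once those foundations are in place, the proof is the purely formal chain of natural equivalences indicated above.
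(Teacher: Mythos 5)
Your proposal is correct and follows essentially the same route as the paper's own proof: identify the left-hand side with $F(M)$ via Yoneda and cofibrancy of representables, observe the hocolim of representables is projectively cofibrant so the derived $\Hom$ agrees with the honest one, and then use that $\Hom_{\sF}(-,F)$ turns the hocolim into a holim to land on the descent map $F(M)\to\holimsub{S\subset I}F(U_S)$. The only difference is cosmetic: the paper cites Hirschhorn (Theorem 18.5.2) for cofibrancy of the homotopy colimit and appeals to ``usual formulas and cartesian closedness'' rather than spelling out the Bousfield--Kan/SM7 argument, but the content is the same.
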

\begin{proof}
The homotopy colimit on the right hand side is cofibrant (by \cite{HH}, Theorem 18.5.2) so we can consider the honest (i.e. non-derived) space of morphisms functor $\Hom_{\PSh(\sC)}$ instead. Moreover,
\[
\Hom_{\PSh(\sC)}(\hocolimsub{S \subset I} \sC(-,U_S), F) \simeq \holimsub{S \subset I} \Hom_{\PSh(\sC)}(\sC(-,U_S), F)
\]
which one can check by using the usual formulas computing hocolim/holim and cartesian closedness. The assertion now follows by applying the enriched Yoneda Lemma to both sides.
\end{proof}
\begin{defn} A morphism $F \rightarrow G$ in $\PSh(\sC)$ is a $\tau$\emph{-local equivalence} if
\[
\mathbb{R}\Hom_{\PSh(\sC)}(G, Z) \overset{\simeq}{\longrightarrow} \mathbb{R}\Hom_{\PSh(\sC)}(F, Z)
\] 
is a weak equivalence for every homotopy $\tau$-sheaf $Z$.
\end{defn}
\quad Note that if $F \rightarrow G$ is an objectwise equivalence, then it is a $\tau$-local equivalence.
\begin{rem} One can say that $\tau$-local equivalences are the maps which are seen as a weak equivalence by every homotopy $\tau$-sheaf. In this terminology, homotopy sheaves are precisely the presheaves that see all maps (\ref{cones}) as weak equivalences. 
\end{rem}

\begin{thm} There is a model structure on the underlying category $\PSh(\sC)$, called the $\tau$-local model structure and denoted by $\PSh^{\tau}(\sC)$, in which
\begin{enumerate}
\item the weak equivalences are the $\tau$-local equivalences,
\item the cofibrations are the same as in the projective model structure on $\PSh(\sC)$,
\item the fibrant objects are the homotopy $\tau$-sheaves.
\end{enumerate}
Moreover, the identity maps
\begin{equation}\label{adj}
id: \PSh(\sC) \leftrightarrows \PSh^{\tau}(\sC) : id
\end{equation}
form a simplicial/topological Quillen adjunction.
\end{thm}

\begin{proof}
This model structure is the (left) Bousfield localisation of the projective model structure on $\PSh(\sC)$ at the set of all maps of the form (\ref{cones}). The statement is then a consequence of the general theory of Bousfield localisations (for more details, see \cite{HH}).
\end{proof}

\begin{rem} In classical topos theory, further conditions are usually imposed on the allowable coverages in order to guarantee that the forgetful functor from sheaves to presheaves has a left adjoint (called sheafification) which preserves finite limits. These conditions are guaranteed  by the structure of a Grothendieck topology on $\tau$. This point of view extends naturally to the simplicial (or $\infty$) setting: if $(\sC, \tau)$ is a site (or more generally, a homotopy site), then the homotopy left adjoint in (\ref{adj}) commutes with finite homotopy limits, i.e. it is homotopy left exact (for details, consult \cite{toen}).
\end{rem}

\quad The image of a presheaf $F$ by the homotopy right adjoint $\RR id$ is the \emph{homotopy sheafification} of $F$. Equivalently, homotopy sheafification is a fibrant replacement in $\PSh^{\tau}(\sC)$, i.e. it consists of a homotopy $\tau$-sheaf $F^{sh}$ together with a $\tau$-local equivalence $F \rightarrow F^{sh}$. Two homotopy sheafifications are necessarily weakly equivalent by uniqueness (up to weak equivalence) of fibrant replacements. 

\quad We will construct an explicit homotopy $\tau$-sheafification functor in section \ref{section3} when $\sC$ is the category of $d$-manifolds and $\tau$ is $\sJ_k$.

\subsection{Taylor tower}\label{TT}

We return to the category of $d$-manifolds $\Man$. Recall from section \ref{section1} that $\Man$ has topologies $\sJ_k$, one for each non-negative $k$. By definition, every covering in $\sJ_{k+1}$ is a covering in $\sJ_{k}$ so,  given a presheaf $F$ in $\PSh(\Man)$, we obtain a tower of sheafifications

\begin{equation*}
\begin{tikzpicture}[descr/.style={fill=white}] 
\matrix(m)[matrix of math nodes, row sep=3em, column sep=3em, 
text height=1.5ex, text depth=0.25ex] 
{
F & & & & \\
F^{(0)} & F^{(1)} & F^{(2)} & F^{(3)} & ... \\}; 
\path[->,font=\scriptsize] 
	(m-1-1) edge node [auto] {} (m-2-1)
	(m-1-1) edge node [auto] {} (m-2-2) 
	(m-1-1) edge node [auto] {} (m-2-3) 
	(m-1-1) edge node [auto] {} (m-2-4)
	
	(m-2-2) edge node [auto] {} (m-2-1) 
	(m-2-3) edge node [auto] {} (m-2-2) 
	(m-2-4) edge node [auto] {} (m-2-3) 
	(m-2-5) edge node [auto] {} (m-2-4); 
	
\path[->,font=\small] 	
	(m-1-1) edge node [auto] {} (m-2-5);
\end{tikzpicture} 
\end{equation*}
\quad More precisely,
\begin{enumerate}
\item the map $F \rightarrow F^{(k)}$ is a homotopy $\sJ_k$-sheafification of $F$,
\item the map $F^{(k+1)} \rightarrow F^{(k)}$ is a homotopy $\sJ_k$-sheafification of $F^{(k+1)}$.
\end{enumerate}

\quad This tower is called the \emph{Taylor tower}. The existence of such a tower is guaranteed by the existence of Bousfield localisations in our setting. In section \ref{section3} we shall give an explicit model for the Taylor tower. It is clear that any two models are weakly equivalent by uniqueness of fibrant replacements.


\section{Enriched Kan extensions}\label{section2}

\quad Recall the category $\D_k$ whose objects are given by $k$ or fewer open balls (definition \ref{Ek}) and let $i$ be the inclusion $\D_k \hookrightarrow \Man$. 

\quad Let $\PSh(\D_k)$ denote the category of presheaves on $\D_k$. The restriction map $i^*: F \mapsto F \circ i$ fits into an $\sS$-enriched adjunction
\begin{equation}\label{adjunction_Kan}
i^*: \PSh(\Man) \leftrightarrows \PSh(\D_k) : Ran_i 
\end{equation}
where the right adjoint is given by $Ran_i$, the terminal or right Kan extension along $i$, as we will see below. It can be calculated as a weighted end (for details see \cite{dubuc}, Theorem I.4.2)
\[
(Ran_i G) (M) = \int_{U \in \D_k} \Hom_{\sS} (\emb(U,M), G(U))
\]
for a $G \in \PSh(\D_k)$. 

In other words, it is the equaliser of
\begin{equation*}\label{enriched_Kan}
\prod_{U \in \D_k} \Hom_\sS( \emb(U,M), G(U) ) \rightrightarrows \prod_{U, V \in \D_k} \Hom_\sS (  \emb(U, V) \times \emb(V, M),G(U) )
\end{equation*}
when evaluated at $M \in \Man$.

\begin{prop} The enriched terminal Kan extension  $Ran_i G$ of $G$ along $i$ is $\sS$-naturally isomorphic to the presheaf which assigns to a manifold $M$ the space of natural transformations from $\emb(-,M)$ to $G$, i.e.
$
\Hom_{\PSh(\D_k)}(\emb(-,M), G)
$

\end{prop}
\begin{proof}
By direct checking, using the fact that $\sS$ is cartesian closed.
\end{proof}

\quad The Yoneda lemma provides a natural transformation
\[
\epsilon : i^*Ran_i G \rightarrow G
\]
and hence a map of spaces
\begin{equation}\label{Kanext}
\Hom_{\PSh(\Man)} (Z, Ran_i G) \rightarrow \Hom_{\PSh(\D_k)} (i^*Z, i^*G)
\end{equation}
natural in $G \in \PSh(\Man)$ and $F \in \PSh(\D_k)$, obtained by applying $i$ and then post-composing with $\epsilon$. Saying that this map is a natural homeomorphism is equivalent to saying that $Ran_i$ is the right adjoint to  $i^{*}$.  One can then check this by reduction to the case of representables. Indeed, for $Z = \emb(-,M)$, the map $(\ref{Kanext})$ is a homeomorphism by the Yoneda lemma. Given an arbitrary presheaf $Z$, write it as a colimit of representables and then use the fact that $\Hom(\colim\, Z_i, G) \simeq \lim \Hom(Z_i, G)$.

\quad Since $i$ is a full embedding, $\epsilon$ is a natural homeomorphism, i.e. $\epsilon_V: Ran_i G(V) \cong G(V)$ for every $V \in \D_k$. To sum up, given be a presheaf $F$ in $\PSh(\Man)$, $Ran_i (F \circ i)$ is the best terminal approximation to $F$ by a presheaf which agrees with $F$ on $\D_k$.

\subsection{Homotopical version}
For homotopy-theoretic purposes $Ran_i$ is not appropriate, however. We need to consider the homotopical (or $\infty$) counterpart of the adjunction (\ref{adjunction_Kan}), a simplicial/topological Quillen adjunction
\begin{equation}\label{wadjunction_Kan}
i^*: \PSh(\Man) \leftrightarrows \PSh(\D_k) : \sT_k 
\end{equation}
where $\sT_k$ denotes the homotopy right adjoint to $i^*$. We proceed like in the non-homotopical case by defining a candidate for $\sT_k$ and showing it is indeed a homotopy right adjoint.

\begin{defn}\label{TkF}
Let $F \in \PSh(\D_k)$. Define the presheaf $\sT_k F$ in $\PSh(\Man)$ as
\[
(\sT_k F)(M) = \mathbb{R}\Hom_{\PSh(\D_k)} (\emb(-,M), F)
\]
\end{defn}

\begin{notn}
Note that we are restricting $\emb(-,M)$  to the full subcategory $\D_k$. So, strictly speaking, $\sT_k F(M) := \mathbb{R}\Hom_{\PSh(\D_k)} (i^*\emb(-,M), F)$, although we suppress this redundant information in the notation.
\end{notn}


\quad A few comments are in order:
\begin{enumerate}
\item If $M$ is in $\D_k$, then $\sT_k F (M) \simeq \Hom_{\PSh(D_k)}(\emb(-,M), F)$ since representables are cofibrant\footnote{Note, however, we are not claiming that there is a \textit{functorial} cofibrant replacement $Q$ which is the identity on representables.} in the projective model structure on $\PSh(\D_k)$. The enriched Yoneda lemma then gives a natural weak equivalence 
\[
\epsilon : i^*(\sT_k F) \rightarrow F
\]
Hence $\sT_kF$ agrees with (meaning, is objectwise equivalent to) $F$ on $\D_k$ .
\item The adjoint of the evaluation map
\begin{equation*}
\begin{array}{cccc}
ev :  & F(M) \times \emb(U,M) & \longrightarrow & F(U) \\
 	&(x, f) & \mapsto & F(f)(x)\\
\end{array}
\end{equation*}
gives rise to a morphism $F \rightarrow \Hom_{\PSh(\Dk)}(\emb(-,M),F)$  and, composing with the cofibrant replacement functor $Q$, to a morphism
\[
\eta : F \longrightarrow \sT_k F \label{map}
\]
called the $k^{th}$ \textbf{Taylor approximation} to $F$.
\item The value of $\sT_k F$ at a manifold $M$ can be presented as the totalisation of the cosimplicial object given by
\begin{equation*}
[j] \mapsto \prod_{U_0, \dots, U_j \in \D_k} \Hom_{\sS} \biggl(  \; \Bigl( \, \prod_{i = 0}^{j-1} \emb(U_{i}, U_{i+1}) \Bigr) \times \emb(U_j, M) \, , \, F(U_0) \, \biggr)
\end{equation*}

We defer the proof of this fact to the appendix.
\end{enumerate}

\quad Notice $(1)$ and $(2)$ are the counit and unit, respectively, of the hypothetical adjunction which we now show exists.

\begin{prop}\label{Tk_univ} Let $F \in \PSh(\D_k)$. Then
\begin{equation}
\mathbb{R}\Hom_{\PSh(\Man)}(G,\sT_k F) 
\xrightarrow{\simeq} \mathbb{R}\Hom_{\PSh(\D_k)}(i^*G, F)
\end{equation}
natural in $G \in \PSh(\Man)$.
\end{prop}

\begin{proof} The case of a representable functor $G = \emb(-,M)$ is straightforward from the Yoneda lemma. Given a presheaf $G$, we can resolve it by representables as in the appendix and remark (3) above. Namely, $G \simeq |\sL(G)_{\bullet}|$ where $\sL(G)_{i}$ is essentially a coproduct of representables. By bringing geometric realisation outside the $\Hom$ as a totalisation on both sides, we reduce the problem to the case of representables thus proving the claim.
\end{proof}

\begin{rem} Moreover, $\sT_k F$ is an $\infty$-full embedding in the sense that
\[
\RR \Hom_{\PSh(\Dk)}(Z,F) \rightarrow \RR \Hom_{\PSh(\Man)}(\sT_k Z, \sT_k F)
\]
is a natural weak equivalence. This follows automatically from (1) above, i.e. that the counit is a weak equivalence.
\end{rem}

\quad The proposition and remark above justify describing $\sT_k F$ as the best homotopical terminal approximation of $F$ by a functor on $\Man$ which agrees with $F$ on $\Dk$.
In what follows, we will mostly consider presheaves $F$ in $\PSh(\Man)$ so we often write $\sT_k F$ to mean $\sT_k (F \circ i)$ when no confusion should arise.



\section{A model for the Taylor tower}\label{section3}
\quad This section is the heart of the paper. We show that the Taylor approximation $\sT_k F$ is a model for the homotopy sheafification of $F$ with respect to $\sJ_k$. This identifies the Taylor tower 
with the tower of homotopical approximations with respect to the subcategories $\Dk$ of $\Man$.

\begin{thm}\label{Tksheaf}
The presheaf $\sT_k F$ is a homotopy $\sJ_k$-sheaf.
\end{thm}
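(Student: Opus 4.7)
The plan is to reduce the $\sJ_k$-sheaf condition on $\sT_k F$ to an objectwise statement on $\sE_k$, using the universal property (\ref{Tk_univ}). By the local-object characterization of homotopy sheaves, $\sT_k F$ is a homotopy $\sJ_k$-sheaf iff for every $k$-cover $\{U_i \to M\}_{i \in I}$ the map
$$\mathbb{R}\Hom_{\sF}\!\big(\sE(-,M),\, \sT_k F\big) \longrightarrow \mathbb{R}\Hom_{\sF}\!\Big(\hocolimsub{S \subset I} \sE(-, U_S),\, \sT_k F\Big)$$
is a weak equivalence. Applying (\ref{Tk_univ}) to each side, and using that restriction along $i : \sE_k \hookrightarrow \sE$ commutes with homotopy colimits, this reduces to showing that
$$\hocolimsub{S \subset I} \sE(-, U_S)\big|_{\sE_k} \;\longrightarrow\; \sE(-, M)\big|_{\sE_k}$$
is a projective weak equivalence in $\sF_k$. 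Equivalently, for every $V \in \sE_k$ (so $V \cong \coprod^{m} \RR^d$ with $m \leq k$), the map
$$\hocolimsub{S \subset I} \sE(V, U_S) \;\longrightarrow\; \sE(V, M) \qquad (\ast)$$
must be a weak equivalence of spaces.

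I would then attack $(\ast)$ geometrically. Each $\sE(V, U_i)$ sits as an open subspace of $\sE(V, M)$, and $\sE(V, U_S) = \bigcap_{i \in S} \sE(V, U_i)$. Let $\mathcal{W} := \bigcup_{i \in I} \sE(V, U_i) \subset \sE(V, M)$. By the standard principle that the homotopy colimit over the nerve of a numerable family of open subspaces recovers their union (proved via a partition of unity on $\mathcal{W}$ subordinate to $\{\sE(V, U_i)\}$), one has $\hocolim_S \sE(V, U_S) \simeq \mathcal{W}$. So $(\ast)$ reduces to showing the inclusion $\mathcal{W} \hookrightarrow \sE(V, M)$ is a weak equivalence. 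Here the multi-local hypothesis enters: fix a center point $p_j$ in each component of $V$; for any embedding $f : V \to M$ the tuple $(f(p_1), \ldots, f(p_m))$ is an ordered configuration of $m \leq k$ distinct points in $M$, and by the defining property of a $k$-cover there is some $i \in I$ with $f(p_j) \in U_i$ for all $j$. Shrinking $f$ sufficiently towards the $p_j$ produces an isotopic embedding whose image lies entirely in $U_i$, hence a point of $\mathcal{W}$.

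The main obstacle I expect is making this shrinking continuous across a compact parameter family of embeddings, so as to obtain null-homotopies of all relative homotopy groups $\pi_n(\sE(V, M), \mathcal{W})$. The fix is to stratify the parameter space according to which $U_i$ serves as a receptacle, then use a subordinate partition of unity on the parameter space to interpolate a family of shrinking parameters in local coordinates around the $p_j$; the resulting family of embeddings can then be placed inside the respective $\sE(V, U_i) \subset \mathcal{W}$, and the linear interpolation back to the identity provides the required compression homotopy. This is routine but delicate bookkeeping, and uses nothing beyond the $k$-cover hypothesis and standard isotopy extension. Establishing this compression proves $(\ast)$ and thus that $\sT_k F$ satisfies $\sJ_k$-descent.
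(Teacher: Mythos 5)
Your proof is correct and reduces, like the paper's, to the single key geometric input: that for every $V\in\sE_k$ the canonical map $\hocolimsub{S\subset I}\sE(V,U_S)\to\sE(V,M)$ is a weak equivalence (the paper's display (\ref{cof})). The scaffolding around it differs slightly but inessentially — you route through the local-object characterisation and the universal property (\ref{Tk_univ}), while the paper just unwinds the definition $\sT_kF(M)=\mathbb{R}\Hom_{\sF_k}(\sE(-,M),F)$ and swaps a hocolim for a holim; these amount to the same manipulation.

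Where you genuinely diverge is in the proof of the objectwise equivalence itself. The paper dispatches it in one line by invoking the natural weak equivalence $\sE(V,-)\simeq \fconf{j}{-}$ (evaluation at centres and derivatives), where $j$ is the number of components of $V\leq k$. After that substitution the subsets $\fconf{j}{U_i}$ literally form an open cover of $\fconf{j}{M}$ by the $k$-cover hypothesis, so the Mayer--Vietoris/\v{C}ech hocolim identification finishes it with no shrinking needed. You instead work directly with embedding spaces: you identify $\hocolim_S\sE(V,U_S)$ with the union $\mathcal W=\bigcup_i\sE(V,U_i)$, and then have to show the strict inclusion $\mathcal W\hookrightarrow\sE(V,M)$ is a weak equivalence by a parametrised compression argument. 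Be aware that the ``routine but delicate bookkeeping'' you defer there is exactly the content of the scanning weak equivalence $\sE(V,M)\simeq\fconf{j}{M}$: the difficulty of choosing a receptacle $U_{i(t)}$ coherently across a compact family, together with a continuous shrinking parameter, is not less work than proving that equivalence. So while your argument is sound in outline and proves the same thing, the paper's configuration-space reduction is cleaner precisely because it performs the shrinking once, uniformly, inside a standard weak equivalence, rather than re-deriving it relative to the particular cover. A tidy hybrid, if you wish to keep your framing, is to note that evaluation at centres sends $\mathcal W$ into $\bigcup_i\fconf{j}{U_i}=\fconf{j}{M}$ and is a weak equivalence on both the ambient space and each $\sE(V,U_i)$, whence $\mathcal W\hookrightarrow\sE(V,M)$ is an equivalence by two-out-of-three — this avoids the hands-on family compression entirely.
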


\begin{proof}
Let $\{U_i \rightarrow M\}_{i \in I}$ be a  $\sJ_k$-cover of $M$. For each $V$ in $\Dk$, the spaces $\emb(V,M)$ and $\emb(V, U_S)$ are homotopy equivalent to the spaces of (ordered) framed configurations of $j$ points in $M$ and $U_S$ respectively, where $j$ is the number of components of $V$. The homotopy equivalence is obtained by taking the value and first derivative of the embedding at the origin of each component.
Hence, the canonical map of presheaves on $\Dk$
\begin{equation}\label{cof}
\hocolimsub{S \subset I} \emb(-,U_S) \longrightarrow \emb(-,M)
\end{equation}
is an objectwise equivalence. It follows that

\begin{equation*}
\begin{array}{ccl}
\sT_k F(M) & \overset{def}{=} & \mathbb{R}\Hom_{\PSh(\Dk)}(\emb(-,M), F) \\
		 & \simeq & \mathbb{R}\Hom_{\PSh(\Dk)}(\hocolimsub{S \subset I} \emb(-,U_S), F)\\
		 & \simeq & \holimsub{S \subset I}  \mathbb{R} \Hom_{\PSh(\Dk)}(\emb(-,U_S), F)\\
		 & = & \holimsub{S \subset I} \sT_kF(U_S)\\
\end{array}
\end{equation*}
\quad The first equivalence holds since the derived Hom preserves weak equivalences by definition. The second equivalence follows from Theorem 19.4.4, \cite{HH}.
\end{proof}

\begin{thm} \label{sheafequalsTk}
The following are equivalent for a presheaf $F \in \PSh(\Man)$.
\begin{enumerate}
\item $F$ is a homotopy $\sJ_k$-sheaf 
\item $F$ is a homotopy $\sJ_k^{\circ}$-sheaf
\item The $k^{th}$ Taylor approximation of $F$
\[
\eta_M: F(M) \overset{\simeq}{\longrightarrow} \sT_k F(M)
\]
is a weak equivalence for each $M \in \Man$.
\end{enumerate}

\end{thm}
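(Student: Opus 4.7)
The plan is to establish the three implications $(3)\Rightarrow(1)\Rightarrow(2)\Rightarrow(3)$. The first is essentially free from Theorem \ref{Tksheaf}, the second is immediate from the definitions, and the content lies in the third, where the existence of good $k$-covers (Proposition \ref{admits_good}) is the key input.

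For $(3)\Rightarrow(1)$, suppose $\eta_M : F(M)\to \sT_kF(M)$ is a weak equivalence for every $M\in \sE$. By Theorem \ref{Tksheaf}, $\sT_kF$ is a homotopy $\sJ_k$-sheaf, and since $F\to \sT_kF$ is an objectwise equivalence, $F$ satisfies $\sJ_k$-descent as well. For $(1)\Rightarrow(2)$, note that by Definition \ref{goodkcover} every good $k$-cover is in particular a $k$-cover, so a homotopy $\sJ_k$-sheaf is automatically a homotopy $\sJ_k^\circ$-sheaf.

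The main step is $(2)\Rightarrow(3)$. Fix $M\in \sE$. By Proposition \ref{admits_good}, $M$ admits a good $k$-cover $\sU := \{U_i\to M\}_{i\in I}$. Naturality of $\eta$ gives a commutative square
\begin{equation*}
\begin{array}{ccc}
F(M) & \xrightarrow{\eta_M} & \sT_kF(M) \\
\downarrow & & \downarrow \\
\holimsub{S\subset I} F(U_S) & \xrightarrow{\holim\, \eta_{U_S}} & \holimsub{S\subset I} \sT_kF(U_S)
\end{array}
\end{equation*}
The left vertical arrow is a weak equivalence by hypothesis (2), since $\sU$ is a covering in $\sJ_k^\circ$. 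The right vertical arrow is a weak equivalence by Theorem \ref{Tksheaf}, since $\sU$ is in particular a covering in $\sJ_k$. Finally, each $U_S$ is a non-empty finite intersection of sets in the good $k$-cover, hence belongs to $\sE_k$ by condition (2) of Definition \ref{goodkcover}; by comment (1) following Definition \ref{TkF}, the map $\eta_{U_S}:F(U_S)\to \sT_kF(U_S)$ is then a weak equivalence, so the bottom horizontal arrow is too. By the two-out-of-three property, $\eta_M$ is a weak equivalence, as required.

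The only non-formal ingredient is the existence of good $k$-covers, which has already been established; everything else reduces to combining the three facts that $\sT_kF$ is a $\sJ_k$-sheaf (Theorem \ref{Tksheaf}), $\sT_kF$ agrees with $F$ on $\sE_k$, and good $k$-covers have all finite intersections in $\sE_k$.
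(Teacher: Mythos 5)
Your proof is correct and matches the paper's own argument essentially step for step: the cycle of implications is the same (stated in a different order), the key commutative square in $(2)\Rightarrow(3)$ is the paper's square transposed, and the three inputs you identify — Theorem \ref{Tksheaf}, the agreement of $F$ and $\sT_kF$ on $\sE_k$, and the existence of good $k$-covers with intersections in $\sE_k$ — are exactly the ones the paper uses.
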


\begin{proof}
$(1) \Rightarrow (2)$ is clear since a good $k$-cover is a $k$-cover. For $(2) \Rightarrow (3)$ take a good $k$-cover $\{U_i \rightarrow M\}_{i \in I}$ of $M$ and let $F$ be a homotopy $\sJ_k^{\circ}$-sheaf. We have the following commutative diagram
\begin{equation*}
\begin{tikzpicture}[descr/.style={fill=white}] 
\matrix(m)[matrix of math nodes, row sep=2.5em, column sep=2.5em, 
text height=1.5ex, text depth=0.25ex] 
{
F(M)  & \holimsub{S \subset I} F(U_S) \\
\sT_k F(M) &  \holimsub{S \subset I} \sT_k F(U_S) \\
}; 
\path[->,font=\scriptsize] 
	(m-1-1) edge node [auto] {$\simeq$} (m-1-2);
\path[->,font=\scriptsize]
	(m-1-1) edge node [left] {} (m-2-1);
\path[->, font=\scriptsize]	
	(m-1-2) edge node [auto] {$\simeq$} (m-2-2)
	(m-2-1) edge node [auto] {$\simeq$} (m-2-2);
\end{tikzpicture} 
\end{equation*}
where the bottom arrow is a weak equivalence by Theorem \ref{Tksheaf} and, by hypothesis, so is the top arrow. The right hand arrow is an equivalence since $F$ and $\sT_k F$ agree on $\Dk$ and $U_S \in \Dk$ by definition of a good $k$-cover.

\quad Finally, $(3) \Rightarrow (1)$ is immediate from Theorem $\ref{Tksheaf}$.
\end{proof}

\begin{thm}\label{sheafification}
The $k^{th}$ Taylor approximation of a presheaf $F$
\[
\eta: F \longrightarrow \sT_k F
\]
is a homotopy $\sJ_k$-sheafification.
\end{thm}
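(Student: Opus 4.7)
Unpacking the definitions, we must establish two things: first, that $\sT_k F$ is a homotopy $\sJ_k$-sheaf, and second, that $\eta$ is a $\sJ_k$-local equivalence. The first is exactly Theorem \ref{Tksheaf}, so the entire content lies in the local equivalence statement, namely that for every homotopy $\sJ_k$-sheaf $Z$ the induced map
\[
\eta^\ast \co \mathbb{R}\Hom_{\sF}(\sT_k F, Z) \longrightarrow \mathbb{R}\Hom_{\sF}(F, Z)
\]
is a weak equivalence.

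The plan is to identify both sides of $\eta^{\ast}$ with the same derived mapping space in $\sF_k$ and then observe that the comparison map is, under these identifications, an identity (up to canonical equivalence). First, because $Z$ is a homotopy $\sJ_k$-sheaf, Theorem \ref{sheafequalsTk} gives an objectwise equivalence $\eta_Z \co Z \xrightarrow{\simeq} \sT_k Z$. Applying the homotopical universal property (\ref{Tk_univ}) with target $Z \simeq \sT_k Z$ yields, for any $G \in \sF$, a natural weak equivalence
\[
\mathbb{R}\Hom_{\sF}(G, Z) \;\simeq\; \mathbb{R}\Hom_{\sF_k}(G|_{\sE_k}, Z|_{\sE_k}).
\]
Specialising to $G=F$ and $G=\sT_k F$ converts $\eta^\ast$ into the restriction-along-$\eta$ map
\[
\mathbb{R}\Hom_{\sF_k}\bigl((\sT_k F)|_{\sE_k},\, Z|_{\sE_k}\bigr) \longrightarrow \mathbb{R}\Hom_{\sF_k}\bigl(F|_{\sE_k},\, Z|_{\sE_k}\bigr).
\]

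Now I invoke the first of the three comments following Definition \ref{TkF}: the Taylor approximation $\eta \co F \to \sT_k F$ restricts to an objectwise equivalence $F|_{\sE_k} \xrightarrow{\simeq} (\sT_k F)|_{\sE_k}$, because on objects of $\sE_k$ the enriched Yoneda lemma identifies $\sT_k F$ with $F$. Hence $\eta|_{\sE_k}$ is a weak equivalence in the projective model structure on $\sF_k$, and the restricted map above is a weak equivalence of derived mapping spaces.

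The main technical point to verify is that, under the two applications of (\ref{Tk_univ}), the map induced by $\eta$ on $\mathbb{R}\Hom_{\sF}(-,Z)$ really does correspond to the restriction of $\eta$ to $\sE_k$ on the right-hand side; this is a naturality assertion that falls out of the construction of the comparison equivalence in Remark \ref{hKanext}, since (\ref{Tk_univ}) is natural in $G$. Once this naturality is in hand, the two-out-of-three property (applied in the projective model structure on $\sF_k$) completes the proof.
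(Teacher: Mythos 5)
Your proposal is correct and takes essentially the same approach as the paper: apply Theorem \ref{Tksheaf} for the sheaf condition, use Theorem \ref{sheafequalsTk} to replace $Z$ by $\sT_k Z$, and invoke the universal property (\ref{Tk_univ}) to identify both source and target of $\eta^\ast$ with $\mathbb{R}\Hom_{\sF_k}(F|_{\sE_k}, Z|_{\sE_k})$. Your write-up is somewhat more explicit than the paper's (which compresses the last two steps into a single sentence), in particular in spelling out the naturality check and the fact that $\eta|_{\sE_k}$ is an objectwise equivalence, but the underlying argument is the same.
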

\begin{proof}
In theorem \ref{Tksheaf} we established that $\sT_k F$ is a homotopy $\sJ_k$-sheaf. We now show that the Taylor approximation is a $\sJ_k$-local equivalence. 

\quad Let $Z$ be a homotopy $\sJ_k$-sheaf. By theorem \ref{sheafequalsTk} the Taylor approximation of $Z$ is an objectwise equivalence, so we are required to show
\[
\mathbb{R}\Hom_{\PSh(\Man)}(\sT_k F, \sT_kZ) \longrightarrow \mathbb{R}\Hom_{\PSh(\Man)}(F, \sT_kZ)
\]
is a weak equivalence. By (\ref{Tk_univ}), the source and target of this map are weakly equivalent to $\mathbb{R}\Hom_{\PSh(\Dk)}(i^* F, i^* Z)$.
\end{proof}


\begin{cor} Let $\phi: F \rightarrow G$ be a map of homotopy $\sJ_k$-sheaves such that $i^* \phi$ is an objectwise equivalence. Then $\phi$ is an objectwise equivalence in $\PSh(\Man)$.
\end{cor}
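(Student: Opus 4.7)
The plan is to compare $\phi$ to $\sT_k \phi$ using the characterisation of homotopy $\sJ_k$-sheaves from Theorem \ref{sheafequalsTk}. For each $M \in \sE$, we have a commutative square
\begin{equation*}
\begin{tikzpicture}[descr/.style={fill=white}]
\matrix(m)[matrix of math nodes, row sep=2.5em, column sep=2.5em,
text height=1.5ex, text depth=0.25ex]{
F(M) & G(M) \\
\sT_k F(M) & \sT_k G(M) \\
};
\path[->,font=\scriptsize]
	(m-1-1) edge node [above] {$\phi_M$} (m-1-2)
	(m-1-1) edge node [left]  {$\eta^F_M$} (m-2-1)
	(m-1-2) edge node [right] {$\eta^G_M$} (m-2-2)
	(m-2-1) edge node [below] {$\sT_k\phi_M$} (m-2-2);
\end{tikzpicture}
\end{equation*}
induced by naturality of the Taylor approximation. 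Since $F$ and $G$ are homotopy $\sJ_k$-sheaves, Theorem \ref{sheafequalsTk} gives that both vertical arrows are weak equivalences. By the two-out-of-three property, it suffices to prove that the bottom arrow $\sT_k\phi_M$ is a weak equivalence.

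For this I would argue directly from the definition
\[
\sT_k F(M) = \mathbb{R}\Hom_{\sF_k}(\sE(-,M)|_{\sE_k}, F|_{\sE_k}),
\]
and similarly for $G$. By hypothesis, the restriction $\phi|_{\sE_k}$ is an objectwise equivalence in $\sF_k$, i.e.\ a weak equivalence between (objectwise) fibrant objects in the projective model structure on $\sF_k$. The derived mapping space $\mathbb{R}\Hom_{\sF_k}(-,-)$ preserves weak equivalences in the second variable, so the induced map
\[
\mathbb{R}\Hom_{\sF_k}(\sE(-,M)|_{\sE_k}, F|_{\sE_k}) \longrightarrow \mathbb{R}\Hom_{\sF_k}(\sE(-,M)|_{\sE_k}, G|_{\sE_k})
\]
is a weak equivalence for every $M$. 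This is exactly $\sT_k\phi_M$, which completes the argument.

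The only real subtlety is ensuring the last step is valid: one needs to know that $\mathbb{R}\Hom_{\sF_k}(X,-)$ is homotopy invariant when the source is representable (or more generally any cofibrant presheaf). This is immediate from the fact that representables are cofibrant in the projective model structure on $\sF_k$ (as noted after Definition \ref{TkF}), so the derived Hom may be computed as the honest enriched Hom into a fibrant replacement, and objectwise equivalences between fibrant presheaves induce equivalences on enriched Hom out of a cofibrant object. No further geometry or descent input is needed beyond Theorem \ref{sheafequalsTk}.
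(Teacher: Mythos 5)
Your proof is correct and follows essentially the same route as the paper: the same naturality square, the same appeal to Theorem \ref{sheafequalsTk} for the vertical arrows, and the verification that $\sT_k\phi$ is an objectwise equivalence via the defining formula for $\sT_k$ (one of the two justifications the paper itself offers). Your spelling out of the homotopy invariance of $\mathbb{R}\Hom_{\sF_k}(\sE(-,M)|_{\sE_k},-)$ is a slightly more detailed version of the paper's parenthetical ``by direct checking using the formula defining $\sT_k$'', but it is not a different argument.
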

\begin{proof}
The statement follows from the commutative diagram below.
\[
	\begin{tikzpicture}[descr/.style={fill=white}] 
	\matrix(m)[matrix of math nodes, row sep=2.5em, column sep=2.5em, 
	text height=1.5ex, text depth=0.25ex] 
	{
	F & G\\
	\sT_k F & \sT_k G \\
	}; 
	\path[->,font=\scriptsize] 
		(m-1-1) edge node [auto] {$\phi$} (m-1-2)
		(m-1-1) edge node [left] {$\simeq$} (m-2-1)
		(m-2-1) edge node [auto] {$\simeq$} (m-2-2) 
		(m-1-2) edge node [auto] {$\simeq$} (m-2-2);
	\end{tikzpicture} 
\]
The vertical arrows are weak equivalences by Theorem \ref{sheafequalsTk}. The bottom arrow is a weak equivalence by the universal property of Kan extensions (or by direct checking using the formula defining $\sT_k$).
\end{proof}

\subsection{$\sT_k$-local structure}
The homotopy idempotent functor $\sT_k : \PSh(\Man) \rightarrow \PSh(\Man)$ defines yet another model structure on $\PSh(\Man)$ by the Bousfield-Friedlander localisation of the projective model structure (see Section 9 in \cite{Bousfield}, in particular Theorem 9.3). For this new model structure, which we refer to as the $\sT_k$-local model structure, a morphism $Q : F \rightarrow G$ is
\begin{enumerate}
\item[(i)] a weak equivalence if the map \[\sT_k Q  : \sT_k F \rightarrow \sT_k G\] is an objectwise equivalence in $\PSh(\Man)$.
\item[(ii)] a fibration if it is an objectwise fibration in $\PSh(\Man)$ and the diagram
\begin{equation*}
\begin{tikzpicture}[descr/.style={fill=white}] 
\matrix(m)[matrix of math nodes, row sep=2.5em, column sep=2.5em, 
text height=1.5ex, text depth=0.25ex] 
{
F & G\\
\sT_k F & \sT_k G\\
}; 
\path[->,font=\scriptsize] 
	(m-1-1) edge node [auto] {$Q$} (m-1-2)
	(m-1-1) edge node [left] {$\eta$} (m-2-1)
	(m-1-2) edge node [auto] {$\eta$} (m-2-2)
	(m-2-1) edge node [auto] {$\sT_k Q$} (m-2-2);
\end{tikzpicture} 
\end{equation*}
is a homotopy pullback square.
\end{enumerate}

\begin{lem} Suppose $Q : F \rightarrow G$ is a morphism in $\PSh(\Man)$. Then $Q$ is $\sT_k$-local equivalence if and only if it is a $\sJ_k$-local equivalence.
\end{lem}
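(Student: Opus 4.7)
The plan is to use the diagram
\[
\begin{array}{ccc}
F & \xrightarrow{Q} & G \\
\eta \downarrow & & \downarrow \eta \\
\sT_k F & \xrightarrow{\sT_k Q} & \sT_k G
\end{array}
\]
together with three inputs: Theorem \ref{sheafification} tells us that the vertical Taylor approximations $\eta$ are $\sJ_k$-local equivalences; Theorem \ref{Tksheaf} tells us that $\sT_k F$ and $\sT_k G$ are homotopy $\sJ_k$-sheaves; and the general theory of Bousfield localization tells us that a $\sJ_k$-local equivalence between two homotopy $\sJ_k$-sheaves (i.e., between two fibrant objects in the $\sJ_k$-local model structure) is automatically an objectwise equivalence.

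For the forward direction, suppose $Q$ is a $\sJ_k$-local equivalence. By two-out-of-three applied to the square above, $\sT_k Q$ is a $\sJ_k$-local equivalence. Since its source and target are homotopy $\sJ_k$-sheaves, the Bousfield localization fact above upgrades this to an objectwise equivalence, which is by definition what it means for $Q$ to be a $\sT_k$-local equivalence.

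For the converse, suppose $Q$ is a $\sT_k$-local equivalence, so $\sT_k Q$ is an objectwise equivalence, hence in particular a $\sJ_k$-local equivalence. Two-out-of-three in the square, together with the fact that the vertical maps are $\sJ_k$-local equivalences, implies that $Q$ is a $\sJ_k$-local equivalence.

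The only non-formal ingredient is the localization fact that a local equivalence between local (= fibrant) objects is an objectwise weak equivalence; this is standard (see \cite{HH}) but is worth flagging as the single conceptual step, since everything else is just two-out-of-three applied to the naturality square of $\eta$. No calculation is required.
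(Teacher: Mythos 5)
Your proof is correct, and it takes a genuinely different route from the paper's. The paper works directly with derived mapping spaces: it shows that $Q$ being a $\sT_k$-local equivalence is equivalent to $\mathbb{R}\Hom_{\sF_k}(G|_{\sE_k}, Z|_{\sE_k}) \rightarrow \mathbb{R}\Hom_{\sF_k}(F|_{\sE_k}, Z|_{\sE_k})$ being an equivalence for all homotopy $\sJ_k$-sheaves $Z$, and then uses Theorem \ref{sheafequalsTk} together with the universal property of the homotopy Kan extension (\ref{Tk_univ}) to translate this into the definition of $\sJ_k$-local equivalence. Your proof instead runs the naturality square of $\eta$ through standard Bousfield-localization machinery: Theorem \ref{sheafification} makes the vertical maps $\sJ_k$-local equivalences, Theorem \ref{Tksheaf} makes the bottom row a map of fibrant objects in the local structure, and the fact that a local equivalence between local objects is a plain (objectwise) equivalence (Hirschhorn 3.2.13 or similar in \cite{HH}) plus two-out-of-three does the rest. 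The paper's argument is more hands-on and makes explicit how the statement depends on the universal characterisation of $\sT_k$; yours is shorter, cleaner, and more formal, delegating the essential content to the already-established Theorems \ref{Tksheaf} and \ref{sheafification} and to general localization theory. Both are valid, and since those two theorems are proved earlier in the section, there is no circularity in your use of them.
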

\begin{proof}
We show that both statements are equivalent to the assertion
\begin{equation}\tag{*}
\mbox{the restriction $i^* Q$ is an objectwise weak equivalence on $\PSh(\Dk)$}
\end{equation}

If $Q$ is a $\sT_k$-local equivalence, this is immediate from the definition of $\sT_k$. Suppose now that $Q$ is a $\sJ_k$-local equivalence. Using Theorem \ref{sheafequalsTk} which identifies a homotopy sheaf $Z$ with $\sT_k Z$, we have that the induced map
\begin{equation*}
\mathbb{R}\Hom_{\PSh(\Man)}(G, \sT_k Z) \rightarrow \mathbb{R}\Hom_{\PSh(\Man)}(F, \sT_k Z)
\end{equation*}
is a weak equivalence for every homotopy $\sJ_k$-sheaf $Z$ in $\PSh(\Man)$. By adjunction, this is equivalent to 
\begin{equation}\label{AA}
\mathbb{R}\Hom_{\PSh(\Dk)}(i^* G, i^*Z) \rightarrow \mathbb{R}\Hom_{\PSh(\Dk)}(i^*F, i^*Z)
\end{equation}
being a weak equivalence for every homotopy sheaf $Z$. Since homotopy $\sJ_k$-sheaves are determined by their value on $\Dk$, we now see this is equivalent to $(*)$ rephrased as saying that the natural map
\begin{equation}\label{AA}
\mathbb{R}\Hom_{\PSh(\Dk)}(i^* G, W) \rightarrow \mathbb{R}\Hom_{\PSh(\Dk)}(i^*F, W)
\end{equation}
is a weak equivalence for every $W \in \PSh(\Dk)$.
\end{proof}

\quad The two model structures have the same cofibrations by definition and the same weak equivalences by the preceding lemma, so the fibrations coincide.
\begin{cor}\label{qe}
The $\sT_k$-local and $\sJ_k$-local model structures on $\PSh(\Man)$ coincide. In particular, the identity functors yield a Quillen equivalence.
\end{cor}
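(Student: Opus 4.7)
The plan is to invoke the standard principle that a model structure is determined by any two of its three distinguished classes of morphisms, then read off the statement directly from the preceding lemma. First I would confirm that both model structures on $\sF$ have the \emph{same cofibrations}: for the $\sJ_k$-local structure this is part of its defining theorem (item (2) of the theorem establishing the $\tau$-local model structure, applied with $\tau = \sJ_k$), and for the $\sT_k$-local structure this is a standard feature of Bousfield--Friedlander localisation, which leaves the projective cofibrations unchanged.

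Second, the preceding lemma identifies the \emph{weak equivalences} of the two structures: a morphism $Q : F \to G$ is a $\sT_k$-local equivalence if and only if it is a $\sJ_k$-local equivalence. With cofibrations and weak equivalences on $\sF$ agreeing, the classes of acyclic cofibrations agree, and since fibrations in any model category are determined by the right lifting property against acyclic cofibrations, the fibration classes also coincide. Hence the two model structures are literally equal as model structures on $\sF$.

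For the Quillen equivalence assertion, once we know the structures are identical there is nothing further to verify: the identity functor $\id : \sF \to \sF$, viewed as the left (and right) adjoint of the identity adjunction, trivially preserves (acyclic) cofibrations and (acyclic) fibrations and detects weak equivalences, so $(\id, \id)$ is a Quillen equivalence between $\sF_{\sT_k\text{-loc}}$ and $\sF_{\sJ_k\text{-loc}}$.

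There is no real obstacle here; the content of the corollary is packaged in the preceding lemma, and the only mild point to be careful about is the Bousfield--Friedlander input, namely that the $\sT_k$-local structure really does inherit the projective cofibrations (which is exactly the hypothesis guaranteed by $\sT_k$ being a well-behaved homotopy idempotent functor, verified implicitly by the setup in the text).
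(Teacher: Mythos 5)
Your argument is correct and follows the same route as the paper: cofibrations agree by construction of the two Bousfield localisations, weak equivalences agree by the preceding lemma, and fibrations are then determined, so the structures coincide and the identity is trivially a Quillen equivalence. Your write-up simply spells out the standard ``two-out-of-three classes determine the third'' step that the paper leaves implicit.
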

\quad  It is worth emphasising that via the $\sT_k$-local structure we have completely described the fibrations in the $\sJ_k$-local structure, something which a priori was not known.


\section{Connection to operads}

\quad For each positive $k$, fix an embedding $\eta_k$ of the disjoint union of $k$ copies of $\RR^d$ in $\RR^{\infty}$. By taking the images of the embeddings $\eta_k$ we obtain a category which is a topological skeleton of $\D_{\infty}$. This category (which we will still refer to as $\D_{\infty}$) is a topological PROP, i.e. its objects are identified with the non-negative integers, and it has a symmetric monoidal structure (here given by disjoint union) which corresponds to the addition of integers. $\D_{\infty}$ is called \textit{framed little $d$-discs} PROP. The framed little $d$-discs operad is the part $\Dn{\infty}(m,1) := \emb(\amalg_m \RR^d, \RR^d)$ of the PROP and since
\[\emb(m,n) \cong \coprod_{f : \underline{m} \rightarrow \underline{n}} \emb(m_1,1) \times \dots \times \emb(m_n,1)\]
where $m_i$ denotes the preimage of $i \in \underline{m} = \{1, \dots, m\}$ by $f$, we can reconstruct the PROP from the operad and vice-versa and use the two words interchangeably.

\quad Moreover, the category $\PSh(\D_{\infty})$ is $\sS$-isomorphic to the category of right modules over the framed little discs operad $P$, denoted $\Mod_P$. Therefore, for a given $F \in \PSh(\Man)$, we obtain a description of $\sT_{\infty} F$ as a derived space of right module maps over the framed little discs operad,
\begin{equation}\label{operads}
\sT_{\infty} F (M) \simeq \mathbb{R}\Hom_{P}( \emb_{M}, F)
\end{equation}
where the two obvious right $P$-modules are $\emb_M (n) := \emb(\amalg_{n} \RR^d, M)$ and $F(n) := F(\amalg_n \RR^d)$. This answers a conjecture (4.14,  \cite{AroneTurchin}) of G. Arone and V. Turchin.

\quad Combining (\ref{operads}) with the analyticity results of Goodwillie-Klein for the embedding functor one has the following immediate consequence.
\begin{prop} Suppose $\textup{dim}(N) - \textup{dim}(M) \geq 3$. Then 
\[
\emb(M,N) \simeq \mathbb{R}\Hom_{P}( \emb_{M}, \emb_{N})
\]
\end{prop}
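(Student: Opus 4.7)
\medskip

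\noindent\textbf{Proof proposal.} The plan is to apply the main equivalence (\ref{operads}) to the presheaf $F := \textup{Emb}(-,N)$ on $\sE$, and then invoke the Goodwillie--Klein convergence theorem for the embedding functor. First I would verify that $F$ is indeed an enriched presheaf, i.e.~that precomposition with an embedding defines a continuous map of embedding spaces in the weak $C^\infty$ topology; this is routine. Restricting $F$ to disjoint unions of copies of $\RR^d$ produces exactly the right $P$-module $Emb_N$, i.e. $F(\amalg_n \RR^d) = \textup{Emb}(\amalg_n \RR^d, N) = Emb_N(n)$. Thus (\ref{operads}) yields
\[
\sT_\infty F(M) \;\simeq\; \mathbb{R}\Hom_{P}(Emb_M, Emb_N),
\]
and it remains to identify the left-hand side with $\textup{Emb}(M,N)$.

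For this identification I would argue in two steps. The first step is to pass from the enriched Taylor tower $\{\sT_k F\}$ to the classical (discrete) Taylor tower $\{T_k F\}$ of manifold calculus à la Weiss; Section 8 of the paper supplies a weak equivalence $T_k F(M) \simeq \sT_k F(M)$ for every presheaf on $\sO(M)$ that factors through $\sE$, and $\textup{Emb}(-,N)$ is precisely of this form. Passing to the homotopy inverse limit over $k$ gives $\sT_\infty F(M)\simeq \holim_k T_k F(M)$. The second step is Goodwillie--Klein's multiple disjunction / higher excision estimate, which implies that under the codimension hypothesis $\dim N - \dim M \geq 3$ the canonical map
\[
\textup{Emb}(M,N) \;\longrightarrow\; \holim_k T_k\,\textup{Emb}(-,N)(M)
\]
is a weak equivalence. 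Concatenating the two equivalences delivers the proposition.

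The main potential obstacle is ensuring that the convergence input from Goodwillie--Klein, which is traditionally phrased for the discrete Taylor tower on $\sO(M)$, matches the enriched tower used throughout this paper. This is not a deep issue but the bookkeeping must be done carefully: one must check that the maps $T_k F \to T_{k-1} F$ correspond, under the equivalence of Section 8, to the structural maps $\sT_k F \to \sT_{k-1} F$ arising from the inclusions $\sE_{k-1} \subset \sE_k$, so that the homotopy inverse limits agree. Once this compatibility is in place, the proposition is, as stated in the paper, an immediate corollary of combining (\ref{operads}) with classical convergence.
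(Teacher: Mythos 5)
Your argument is exactly the one the paper has in mind: the proposition is stated there as an ``immediate consequence'' of combining equation (\ref{operads}) with the Goodwillie--Klein analyticity estimates, and your proposal simply spells out the two implicit steps (passing from the enriched tower $\sT_k$ to the classical tower $T_k$ via Section 8, then invoking convergence in codimension $\geq 3$). The bookkeeping concern you flag about compatibility of structure maps is real but routine, and the paper treats it as such by omission.
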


\begin{rem}
For finite $k$, we obtain `truncated' versions of (\ref{operads}). The category $\PSh(\Dk)$ is $\sS$-isomorphic to the category of $k$-truncated right modules over the $k$-truncated framed little discs operad. The composition product on the the category of $k$-truncated (symmetric) sequences is the obvious one,
\[M(n) \times M(m_1) \times \dots \times M(m_n) \rightarrow M(m_1 + \dots + m_n)\]
only defined when $m_1 + \dots + m_n \leq k$. 

\quad Specialising to $n \leq k$, we view $\emb_M$ and $F$ as $k$-truncated sequences of spaces.  In particular, $\emb_M$ and $F$ are $k$-truncated modules over the $k$-truncated framed little discs operad $P_k := \{P(n)\}_{n \leq k}$, and we see that
\begin{equation}\label{tr_operads}
\sT_{k} F (M) \simeq \mathbb{R}\Hom_{P_k}( \emb_{M}, F)
\end{equation}
\end{rem}

\vspace{0.2cm}
\quad Another example of interest is the singular chains of the embedding functor, $S_{*}\emb(-,N)$. We will briefly sketch how to obtain a chain complex version of $\sT_k$. We write $S_*$ for the normalised singular chains functor $\mathsf{Top} \rightarrow \Ch$. Since it is a lax monoidal functor, we can use it to enrich $\Man$ over chain complexes.

\quad Rename, for the rest of this section, $\PSh(\Man)$ (resp. $\PSh(\Dk)$) as the category of $\Ch$-enriched presheaves from $\Man$ (resp. $\Dk$) to $\Ch$ and define
\[
\sT_k^{\mathsf{Ch}} F (M) := \RR \Hom_{\PSh(\Dk)}(S_*\emb(-,M), F) \in \Ch \]

\quad The arguments of the previous sections show that $F \rightarrow \sT_{k}^{\mathsf{Ch}}F$ is a homotopy $\sJ_k$-sheafification. It is also not hard to show that $\sF_k$ is $\Ch$-equivalent to the category of right modules over $S_*P$, the chains of the framed little discs operad. Hence,
\[
\sT_{\infty}^{\mathsf{Ch}} F (M) \simeq \RR \Hom_{S_*P}(S_*\emb_M, F)
\]
\quad In the particular case when $F$ is the (normalised) singular chains of $\emb(-,N)$ we obtain the following result, by the analyticity results in \cite{homology}.
\begin{prop} Suppose $2 \, \textup{dim}(M) + 1 < \textup{dim}(N)$. The Taylor approximation gives a chain homotopy equivalence
\[
S_*\emb(M,N) \simeq \RR \Hom_{S_*P}(S_*\emb_M, S_*\emb_N)
\]
natural in $M$ and $N$.
\end{prop}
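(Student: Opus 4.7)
The plan is to specialise the chain-level identification
\[
\sT_\infty^{Ch} F(M) \simeq \RR\Hom_{S_*P}(S_*Emb_M, F)
\]
derived just above to $F = S_*\textup{Emb}(-,N)$. Under this identification the Taylor approximation $F(M) \to \sT_\infty^{Ch} F(M)$ becomes exactly the map in the proposition, so it suffices to show that this evaluation map is a chain homotopy equivalence whenever $2\,\textup{dim}(M) + 1 \leq \textup{dim}(N)$.

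First, I would rewrite $\sT_\infty^{Ch} F(M)$ as $\holimsub{k} \sT_k^{Ch} F(M)$. This follows from the construction of the Taylor tower (Section~\ref{TT}) as a sequence of successive $\sJ_k$-sheafifications, from Theorem~\ref{sheafification}, which presents each finite stage as $\sT_k^{Ch} F$, and from the fact that $\sE_\infty$ is the filtered union of the full subcategories $\sE_k$, so that a derived mapping space out of $\sE_\infty$ is the homotopy limit of the derived mapping spaces out of the $\sE_k$.

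Second, I would invoke the homological analyticity of the embedding functor from \cite{homology}: under the hypothesis $2\,\textup{dim}(M) + 1 \leq \textup{dim}(N)$ the Goodwillie--Klein excision estimates for chains imply that the natural map
\[
S_*\textup{Emb}(M,N) \longrightarrow \holimsub{k} T_k\bigl(S_*\textup{Emb}(-,N)\bigr)(M)
\]
from the presheaf to the homotopy limit of its polynomial approximations is a chain homotopy equivalence.

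The step I expect to be the main obstacle is verifying that the chain-enriched tower $\sT_k^{Ch}$ constructed here coincides, up to natural chain equivalence, with the polynomial approximation tower whose convergence is established in \cite{homology}. This is the chain-complex analogue of the comparison of Section~8, and it follows from the universal property~(\ref{Tk_univ}): both approximations are polynomial of degree $\leq k$ and both agree with $F$ on $\sE_k$, so they must be naturally chain equivalent. With that identification in place, the two steps combine to yield the proposition.
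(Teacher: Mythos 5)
Your proposal follows essentially the same route as the paper. The paper deduces the proposition directly from the identification $\sT_\infty^{Ch} F(M) \simeq \RR\Hom_{S_*P}(S_*Emb_M, F)$ by appealing to the analyticity results of \cite{homology}; your three expansion steps (rewriting $\sT_\infty^{Ch}F$ as a homotopy limit over the $\sT_k^{Ch}F$, invoking homological convergence of the embedding tower, and matching $\sT_k^{Ch}$ against the tower in \cite{homology} via the universal property and a Section~8--type comparison) are precisely what that citation is shorthand for.
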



\section{Homotopy $\sJ_k$-sheaf = Polynomial functor}\label{section4}

\quad Recall from section \ref{section1} that a polynomial functor of degree $\leq k$ is a homotopy sheaf for the coverage $\sJ_k^h$.

\begin{defn}
A presheaf $F \in \PSh(\Man)$ is \textbf{good} if, for any sequence $U_0 \subset U_1 \subset \dots$ in $\Man$ whose union is $M$, the natural map\
\[
F(M) \longrightarrow \holimsub{i} F(U_i)
\]
is a weak equivalence of spaces.
\end{defn}

\begin{thm}\label{polequalsJk}
The following are equivalent
\begin{enumerate}
\item $F$ is a homotopy $\sJ_k$-sheaf
\item $F$ is good and polynomial of degree $\leq k$. 
\end{enumerate}
\end{thm}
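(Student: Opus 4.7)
The plan is to treat the two directions separately. For $(1) \Rightarrow (2)$, first observe that every $\sJ_k^h$-cover is a $k$-cover by pigeonhole: given $k+1$ pairwise disjoint closed sets $A_0, \dots, A_k$ and any $k$ points of $M$, at least one $A_j$ contains none of them, so those points all lie in $M \setminus A_j$. Hence a homotopy $\sJ_k$-sheaf automatically satisfies $\sJ_k^h$-descent, i.e. is polynomial of degree $\leq k$. For goodness, if $U_0 \subset U_1 \subset \dots$ exhausts $M$, the family $\{U_i \to M\}$ is a $k$-cover (any finite subset of $M$ eventually lies in some $U_i$). Because the family is nested, $U_{i_0} \cap \dots \cap U_{i_n} = U_{\min\{i_0,\dots,i_n\}}$, and a standard cofinality argument identifies the descent homotopy limit indexed by nonempty finite subsets of $\{0,1,\dots\}$ with $\holim_i F(U_i)$.

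For $(2) \Rightarrow (1)$, the plan is to use $\sT_k F$ as a mediator: by Theorem \ref{sheafequalsTk} it is enough to show that the Taylor approximation $\eta : F \to \sT_k F$ is an objectwise equivalence. We already know that $\sT_k F$ is a homotopy $\sJ_k$-sheaf by Theorem \ref{Tksheaf}, hence good and polynomial by the direction just proved, and that $\eta$ is an objectwise equivalence on every object of $\sE_k$. The proof therefore reduces to the following rigidity statement: \emph{a natural transformation $\alpha : F \to G$ between good polynomial functors of degree $\leq k$ that is an objectwise equivalence on $\sE_k$ is an objectwise equivalence throughout $\sE$.}

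I would prove this rigidity lemma by an induction built on a handle decomposition. First use goodness of both $F$ and $G$ to reduce to manifolds $M$ which are the interior of a finite handlebody, by exhausting a general $M$ by an increasing sequence of such open submanifolds. Then induct on the number of handles: in the base case $M$ is a disjoint union of balls, which either already lies in $\sE_k$ or can be split into $\sE_k$-pieces using an explicit $\sJ_k^h$-cover. For the inductive step, with $M$ obtained from $M'$ by attaching one extra handle, one exhibits a $\sJ_k^h$-cover $\{M \setminus A_j\}_{j=0}^{k}$ such that each member and each iterated intersection is diffeomorphic either to $M'$ (to which the inductive hypothesis applies) or to an object of $\sE_k$; then $\sJ_k^h$-descent for both $F$ and $G$ propagates the equivalence from the pieces to $M$. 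Applying the rigidity lemma to $\eta : F \to \sT_k F$ then completes the argument.

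The main obstacle is precisely this geometric construction of the closed sets $A_j$: one must split the newly attached handle into at most $k$ cleanly separated chunks (so that in each $M \setminus A_j$ one sees either all of $M'$ or a disjoint union of $\leq k$ balls) while simultaneously arranging that iterated intersections over any subset of $\{0, \dots, k\}$ retain this form. This is the technical heart of the argument and parallels the handle-induction scheme used to establish the analogous statement in the unenriched setting in \cite{embI}.
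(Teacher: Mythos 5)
Your proposal is correct and takes essentially the same route as the paper: $(1)\Rightarrow(2)$ by noting that $\sJ_k^h$-covers and nested exhaustions are $k$-covers, and $(2)\Rightarrow(1)$ by reducing, via Theorem \ref{sheafequalsTk}, to showing $\eta\colon F\to\sT_kF$ is an objectwise equivalence through a handle-decomposition induction (which the paper also borrows from \cite{embI}). Your abstraction into a rigidity lemma for arbitrary maps between good polynomial functors is a harmless reorganization of the same argument. One caveat in your sketch of the inductive step: the pieces $M\smallsetminus A_j$ and their iterated intersections are \emph{not} in general diffeomorphic to $M'$ or to objects of $\sE_k$; rather, the paper chooses the $A_j$ inside a single top-index $s$-handle (as $e(D^{d-s}\times C_j)\cap M$ for disjoint closed discs $C_j\subset D^s$), so that each $M\smallsetminus A_{j}$ and each intersection is the interior of a handlebody with \emph{fewer $s$-handles}, and the induction is lexicographic on (top handle index $s$, number of $s$-handles), with a separate sub-induction on the number of components in the $s=0$ base case. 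Stated as ``induction on the number of handles'' with pieces of shape $M'$ or $\sE_k$, the argument would not close; the actual invariant that decreases is the one just described.
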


\begin{proof}
A covering in $\sJ_k^{h}$ is a covering in $\sJ_k$ so in order to show $(1) \Rightarrow (2)$ we need only prove goodness. Observe that a covering $\{U_i \rightarrow M\}_{i \in \NN}$ of $M$ with $U_i \subset U_{i+1}$ is a $k$-cover and
\[
\holimsub{\varnothing \neq S \subset I} F(U_S) \simeq  \holimsub{i} F(U_i)
\]
so the homotopy $\sJ_k$-sheaf property for these coverings is precisely the condition of goodness.

\quad Now, suppose $F \in \PSh(\Man)$ is good and polynomial of degree $\leq k$. By Theorem \ref{sheafequalsTk}, we are required to show that 
\begin{equation}\label{sss}
F(M) \rightarrow \sT_kF(M)
\end{equation}
 is a weak equivalence for every $M \in \Man$. The proof is now essentially the same as the one of Theorem 5.1 in \cite{embI}.
 
\quad Due to the goodness of $F$, we can assume that $M$ is the interior of a compact handlebody $L$. Take a handle decomposition of $L$ with top-dimensional handles of index $s$.

\quad The first case is $s = 0$, i.e. $M \simeq \amalg_i \RR^d$ for some $i$. If $i \leq k$, then $F(M) \simeq \sT_k F(M)$ since $F$ and $\sT_k F$ agree on $\Dk$ by construction. For $i > k$ we proceed inductively. Choose $k+1$ distinct components $A_0, \dots, A_k$ of $M$ and consider the commutative diagram
\[
	\begin{tikzpicture}[descr/.style={fill=white}] 
	\matrix(m)[matrix of math nodes, row sep=2.5em, column sep=2.5em, 
	text height=1.5ex, text depth=0.25ex] 
	{
	F(M) & \sT_k F(M)\\
	 \holimsub{\varnothing \neq S \subset \{0,\dots,k\}} F(M \backslash A_S) & \holimsub{\varnothing \neq S \subset \{0,\dots,k\}} \sT_k F(M \backslash A_S) \\
	}; 
	\path[->,font=\scriptsize] 
		(m-1-1) edge node [auto] {$\eta$} (m-1-2)
		(m-1-1) edge node [left] {$\simeq$} (m-2-1)
		(m-2-1) edge node [auto] {$\simeq$} (m-2-2) 
		(m-1-2) edge node [auto] {$\simeq$} (m-2-2);
	\end{tikzpicture} 
\]
\quad The vertical arrows are weak equivalences since $F$ is polynomial of degree $\leq k$ by hypothesis, and the horizontal arrow is a weak equivalence by induction.

\quad Now, suppose $s > 0$. Pick one of the $s$-handles
\[
e : D^{d-s} \times D^s \longrightarrow L
\]
where $e^{-1}(\partial L) = D^{d-s} \times S^{s-1}$. 

\quad Take pairwise disjoint closed discs $C_0, \dots, C_k$ in the interior of $D^s$ and define
\[
A_i := e(D^{d-s} \times C_i) \cap M
\]
\quad Then,
\begin{enumerate}
\item $A_i$ is closed in $M$ and $M \backslash A_i$ is the interior of a smooth handlebody with a handle decomposition with fewer s-handles, and so is any intersection of these, $\cap_{i \in S} M \backslash A_i$, for $S$ a non-empty subset of $\{0,\dots, k\}$.
\item The family $\{ M \backslash A_i \rightarrow M \}_{i \in \{0,...,k\}}$ is a covering of $M$ for the coverage $\sJ_{k}^{h}$.
\end{enumerate}

\quad By induction, as in the case $s = 0$, the statement is easily verified.
\end{proof}

\begin{rem}
A way to paraphrase the theorem above is as follows. Define a coverage by declaring $Cov(X)$ to consist of coverings in $\sJ_k^h$ and coverings of the form $\{U_i \rightarrow M\}_{i \in \NN}$ with $U_i \subset U_{i+1}$. Then Theorem \ref{polequalsJk} says that this coverage and $\sJ_k$ define the same homotopy sheaves.
\end{rem}

\quad Polynomial functors are important in manifold calculus (and in functor calculus in general) as they  can be given rather explicit  descriptions in terms of cubical diagrams. The coverings in $\sJ_k^h$ are in practice much smaller than arbitrary or good $k$-covers so they are easier to handle.

\begin{expl} 
A polynomial functor of degree $\leq 1$ is a functor $F$ which sends homotopy pushout squares to homotopy pullback squares. 
\end{expl} 
\quad
\subsection{Classification of linear functors}
Following Goodwillie, we call a presheaf $F$ \textit{reduced }Êif $F(\varnothing) \simeq *$.
Most examples of interest are reduced, but not every homotopy $\sJ_1$-sheaf is reduced: take for instance a constant sheaf. One can reduce a presheaf $F$ by setting 
\[F^{red} := \hofiber(F \rightarrow F(\emptyset))\]

\begin{defn}
A reduced polynomial functor of degree $\leq 1$ is called a \emph{linear} functor.
\end{defn}

 If $F$ is reduced, then
\begin{equation*}
\begin{array}{ccl}
\sT_1 F(M) & \simeq & \Hom^{hO(d)}(\emb(\RR^d, M), F(\RR^d)) \\
		 & \simeq & \Hom^{O(d)}(\textup{frame}(M), F(\RR^d)) \\
		 & \simeq & \Gamma(\textup{frame}(M) \times_{O(d)} F(\RR^d) \rightarrow M)\\
\end{array}
\end{equation*}
where $\textup{frame}(M)$ denotes the total space of the tangent frame bundle of $M$, $\Hom^{O(d)}(-,-)$ the space of $O(d)$-maps and $\Hom^{h O(d)}(-,-)$ its derived functor. The first equivalence above follows from the fact that $\D_1(\RR^d,\RR^d) := \emb(\RR^d, \RR^d) \simeq O(d)$ and $F(\varnothing) \simeq *$. 

\quad  Combining Theorems \ref{sheafequalsTk} and \ref{polequalsJk} with the above paragraph we obtain

\begin{prop} The following are equivalent for a presheaf $F \in \PSh(\Man)$.
\begin{enumerate}
\item $F$ is linear and good
\item The `scanning' map
 \[
F(M) \longrightarrow  \Gamma( \textup{frame}(M) \times_{O(d)} F(\RR^d) \rightarrow M)
\]
is a natural weak equivalence.
\end{enumerate}
\end{prop}

\section{Relation to the unenriched model}
\quad Let $\sO(M)$ be the poset of open sets of a manifold $M$, i.e. the discrete and relative version of $\Man$. Clearly there is an `inclusion' functor
\[
\sO(M) \rightarrow \Man
\]
given by inclusion $Ob(\sO(M)) \hookrightarrow Ob(\Man)$ on object-sets and sending a morphism $U \subset V$ in $\sO(M)$ to the inclusion $i \in \emb(U,V)$.
\begin{defn} A functor $f : \sO(M)^{op} \rightarrow \sS$ is called \textit{context-free}\footnote{This terminology is due to G. Arone and V. Turchin} if it factors through $\Man$ by an $\sS$-functor $F : \Man^{op} \rightarrow \sS$.
\end{defn}

\begin{rem}
If $f$ is context-free, then it is necessarily isotopy invariant.
\end{rem}

\quad The discrete analogues of $\Dk$ and $\sT_k$ are denoted $\sO_k$ and $T_k$ respectively. We refer the reader to \cite{embI} for details on the unenriched setting. The following proposition says that $\sT_k$ is really an enrichment of $T_k$.
\begin{prop} Let $f$ be a context-free functor on $\sO(M)$. Then
\[
T_k f (U) \simeq \sT_k F (U)
\]
for every $U \in \sO(M)$.
\end{prop}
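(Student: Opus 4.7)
The plan is to reduce both $T_k f(U)$ and $\sT_k F(U)$ to the same \v Cech-style homotopy limit by choosing a good $k$-cover of $U$, then conclude using that $f$ and $F$ agree on objects of $\sO_k(M)$ (viewed inside $\sE_k$ via the chosen inclusion $\sO(M) \hookrightarrow \sE$, which is the identity on underlying open sets).

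First I would invoke Proposition \ref{admits_good} to pick a good $k$-cover $\{U_i \hookrightarrow U\}_{i \in I}$ of $U$ by open subsets of $M$. By construction, every iterated intersection $U_S = U_{i_0} \cap \cdots \cap U_{i_n}$ simultaneously lies in $\sO_k(M)$ and represents an object of $\sE_k$. On the enriched side, Theorem \ref{sheafequalsTk} says that $\sT_k F$ is a homotopy $\sJ_k$-sheaf, and by the first bullet after Definition \ref{TkF} it agrees objectwise with $F$ on $\sE_k$; therefore
\[
\sT_k F(U) \;\simeq\; \holimsub{S \subset I} \sT_k F(U_S) \;\simeq\; \holimsub{S \subset I} F(U_S).
\]

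Second, on the discrete side, I would invoke the parallel characterization of $T_k f$ from \cite{embI}: namely, $T_k f$ is the good polynomial functor of degree $\leq k$ extending $f|_{\sO_k(M)}$, and the argument of Theorem 5.1 of \cite{embI} applied to our good $k$-cover produces
\[
T_k f(U) \;\simeq\; \holimsub{S \subset I} T_k f(U_S) \;\simeq\; \holimsub{S \subset I} f(U_S),
\]
since every $U_S$ belongs to $\sO_k(M)$, where $T_k f$ agrees with $f$. The context-free hypothesis gives $f(U_S) = F(U_S)$ for each $S$, so both computations land on the same object and the two values are abstractly weakly equivalent.

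The main obstacle is promoting this to a \emph{canonical} comparison $T_k f(U) \to \sT_k F(U)$, rather than merely an abstract equivalence of two spaces. I would construct such a map using the universal property \ref{Tk_univ} of $\sT_k F$ as a homotopy right Kan extension: the restriction $\sT_k F \circ \iota$ along $\iota : \sO(M) \hookrightarrow \sE$ is a presheaf on $\sO(M)$, the unit $F \to \sT_k F$ restricts to $f \to \sT_k F \circ \iota$, and the discrete universal property of $T_k$ then factors this through $T_k f$. Evaluating at $U$ and using the \v Cech chains above, this comparison is identified with the identity on $\holimsub{S} F(U_S)$ and is thus a weak equivalence, completing the argument.
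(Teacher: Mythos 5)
Your proof is correct in spirit but takes a genuinely different, and somewhat longer, route than the paper's. The paper's proof is a direct calculation: it starts from the definition $T_k f(U) := \holimsub{V \in \sO_k(U)} f(V)$, rewrites each $f(V) = F(V)$ as $\Hom_{\sF_k}(\sE_k(-,V),F)$ by the enriched Yoneda lemma, pulls the holim out to get $\Hom_{\sF_k}(\hocolimsub{V \in \sO_k(U)}\sE_k(-,V), F)$, and then observes that the canonical map $\hocolimsub{V \in \sO_k(U)} \sE_k(-,V) \to \sE(-,U)$ is an objectwise equivalence in $\sF_k$ because both sides are framed configuration spaces. Since this hocolim is cofibrant, the mapping space computes $\RR\Hom_{\sF_k}(\sE(-,U),F) = \sT_k F(U)$. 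This is self-contained, explicit, and natural in $U$ by inspection.

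Your route instead goes through a good $k$-cover and \v Cech descent on both sides. This is valid but needs more ingredients: Theorem \ref{Tksheaf} and the remark following Definition \ref{TkF} on the enriched side, and a \emph{discrete} analogue of Theorem \ref{polequalsJk} (namely, that a good polynomial functor of degree $\leq k$ on $\sO(M)$ satisfies \v Cech descent for good $k$-covers) on the unenriched side. The latter is true and implicit in \cite{embI}, but you should cite the precise statement rather than gesture at "the argument of Theorem 5.1." Two further points deserve more care. First, your proposed canonical comparison via the discrete universal property of $T_k$ requires that $\sT_k F \circ \iota$ be good and polynomial of degree $\leq k$ as a functor on $\sO(M)$; this follows from Theorem \ref{polequalsJk} but should be stated. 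Second, the final step — identifying the comparison map with the identity on $\holimsub{S} F(U_S)$ — requires checking that the \v Cech equivalences on both sides are compatible with your comparison map, which is a naturality check you should not elide. None of this is a fatal gap, but the paper's Yoneda/hocolim calculation avoids these complications and is the more efficient argument; on the other hand, your descent-based approach makes the sheaf-theoretic content more visible.
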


\begin{proof}
By definition, $T_k f (U) := \holimsub{V \in \sO_k(U)} f(V)$. Then,

\begin{equation*}
\begin{array}{ccll}
T_k f(M) 
		 & = &\holimsub{V \in \sO_k(M)} F(V) & \\
		 & \cong & \holimsub{V \in \sO_k(M)} \Hom_{\PSh(\Dk)}(\Dk(-,V), F) & \\
		 & \simeq & \Hom_{\PSh(\Dk)}(\hocolimsub{V \in \sO_k(M)} \Dk(-,V), F) &\\
		 & \simeq & \mathbb{R}\Hom_{\PSh(\Dk)}(\emb(-,M), F) & \\
\end{array}
\end{equation*}
The first weak equivalence holds because $f$ is context-free. The second is the enriched Yoneda lemma. The last equivalence follows from the fact that the map of presheaves in $\PSh(\Dk)$
\begin{equation}\label{bah}
\hocolimsub{V \in \sO_k(M)} \emb(-,V) \rightarrow \emb(-,M)
\end{equation}
 is an objectwise equivalence since, again, $\emb(-,V)$ and $ \emb(-,M)$ in $\PSh(\Dk)$ are framed configuration spaces. Moreover, the left-hand side is cofibrant in the projective model structure since representables $\emb(-,V)$ are cofibrant and the homotopy colimit of an objectwise cofibrant diagram in a simplicial model category is cofibrant by Theorem 18.5.2, \cite{HH}.\end{proof}


\section{Boundary case}
\quad Fix a $(d-1)$-manifold $Z$, and let $\Man^{\partial}$ denote the category of $d$-manifolds $M$ with boundary $\partial M$, with a chosen diffeomorphism to $Z$. For simplicity, we assume $Z$ is connected. Morphisms are (neat) embeddings respecting the identification of boundaries with $Z$. The replacement for $\Dk$ is $\Dbk$, the full subcategory of $\Man^{\partial}$ whose objects are identified with non-negative integers (i.e. an object is the disjoint union of $Z \times [0,1)$ with $n$ copies of the disc $\RR^d$). Notice that a morphism in $\Dbk$ may take some of the discs in the source to $Z \times [0,1)$.

\quad In parallel with the non-boundary case, define $\sJ_k$ as the Grothendieck topology on $\Man^{\partial}$ with coverings given by collections $\{U_i \rightarrow M\}$ subject to the conditions that every finite subset of cardinality $k$ in the interior of $M$ is contained in $U_j$ for some $j$.

\quad A good $k$-cover $\{U_i \rightarrow M \}$ is then a $\sJ_k$-cover with the property that every finite intersection $U_{i_0} \cap \dots \cap U_{i_n}$ is in $\Dbk$, for $n \geq 0$.

\quad The boundary versions of the statements in sections $5$, $6$, $7$ and $8$ follow from the propositions below.
\begin{prop}\label{cl1}
 Every manifold $M$ with boundary admits a good $k$-cover.
\end{prop}
\begin{prop}\label{cl2}
For every $\sJ_k$-covering $\{U_i \rightarrow M\}_{i \in I}$ and every $V$ in $\Dbk$, the map
\[
\hocolimsub{\varnothing \neq S \subset I} \emb^{\partial}(V,U_S) \rightarrow \emb^{\partial}(V,M)
\]
is a weak equivalence.
\end{prop}

Here $\emb^{\partial}(V,Y)$ denotes the space of embeddings of $V$ into $M$ fixing the boundary. 
Proposition \ref{cl2} can be proved by analogy with the non-boundary case statement appearing in the proof of Theorem \ref{Tksheaf}. 

\quad A variation of the argument in the proof of Proposition \ref{admits_good} proves Proposition \ref{cl1} as we now briefly describe. Equip $M$ with a complete Riemannian metric which restricts to a product metric on a fixed collar $C$ of the boundary. We now say that an open subset $U$ is $k$-good if it is the disjoint union of a sub-collar $C^{\prime}$ of $C$ (on which the metric also restricts to a product) and $m$ components $U_1, \dots, U_m$ diffeomorphic to $\RR^d$, with $m \leq k$, subject to the conditions that each path component of $U$ is geodesically convex, there exists $\epsilon > 0$ such that the diameter of the ${U_i}$ is less than $\epsilon$, and the distance between any two points in distinct components (including the collar) of $U$ is at least $100 \epsilon$, say. The collection of all $k$-good subsets of $M$ then forms a good $k$-cover. \\

\quad We thus obtain a Taylor tower for a presheaf $F \in \PSh(\Man^{\partial})$ where
\[
\sT_k F (M) = \RR \Hom_{\PSh(\Dbk)}( \emb^{\partial}(-,M), F)
\]
is a model for the $k^{th}$-approximation of $F$.


\appendix

\section{Derived mapping spaces and resolutions}\label{appendix}
\quad Throughout this appendix, we let $\sC$ denote either $\Man$ or $\Dk$ and endow $\PSh(\sC)$ with the projective model structure. Recall this means that weak equivalences and fibrations are determined objectwise.

\subsection{Derived mapping spaces}
The category $\PSh(\sC)$ of $\sS$-enriched functors on $\sC$ is $\sS$-enriched. Given $X$ and $Y$, we denote its enriching morphism object by
\[
\Hom_{\PSh(\sC)}(X,Y)
\]
\quad We now want to make a distinction between simplicial sets and compactly generated Hausdorff spaces (CGHS). 

\quad If $\sS$ = simplicial sets, then $\Hom_{\PSh(\sC)}(X,Y)$ is the simplicial set whose set of $n$-simplices is given by set of natural transformations $X \otimes \Delta[n] \rightarrow Y$, where $(X \otimes \Delta[n])(U) := X(U) \times \Delta[n]$. This makes $\PSh(\sC)$ into a simplicial model category.

\quad If $\sS$ = CGHS, then $\Hom_{\PSh(\sC)}(X,Y)$ denotes the space we obtain by topologising the set of natural transformations $X \rightarrow Y$ by the (kelleyfication of the) subspace topology of the product
\[
\prod_{U \in \sC} \Hom_{\sS}(X(U), Y(U))
\]
equipped with the product topology.

\quad If $\sS$ is CGHS, we can still define a simplicial model structure on $\PSh(\Man)$. The simplicial set of natural transformations between $X$ and $Y$, which we denote by $\Map(X,Y)$, has the following set of $n$-simplicies 
\[
\Map(X,Y)_n := \Hom_{\PSh(\Man)}(X \otimes \Delta[n],Y)
\]
where $(X \otimes \Delta[n])(U) := X(U) \times |\Delta[n]|$, $U \in \Man$.

\quad These two enrichments are related by
\[
\Map(X,Y) \simeq \mathsf{Sing}(\Hom_{\PSh(\Man)}(X,Y))
\]

\begin{defn} The derived mapping space functor is the right derived functor of $\Hom_{\PSh(\sC)}$,
\[
\mathbb{R}\Hom_{\PSh(\sC)}(X,Y) := \Hom_{\PSh(\sC)}(QX ,RY) \in \sS
\]
where $Q$ and $R$ denote, respectively, cofibrant and fibrant replacement functors.
\end{defn}

\quad The `derived' version of $\Map$ is the homotopy function complex of Dwyer-Kan, denoted by $\hMap$. In fact, since $\Map$ makes $\PSh(\sC)$ into a simplicial model category, a model for $\hMap$ is given (\cite{DwyerKan}, Cor. 4.7) by
$\Map(QX,Y)$
where $Q$ denotes a cofibrant replacement functor on $\PSh(\sC)$. Moreover, $\hMap(X,Y) \simeq \mathsf{Sing}(\mathbb{R}\Hom_{\PSh(\sC)}(X,Y))$.


\subsection{Resolutions} In this section we discuss the construction of a resolution of a presheaf $F \in \PSh(\Dk)$ (c.f. 2.6 in \cite{Dugger}). More precisely, we wish to find a cofibrant presheaf $\widetilde{F}$ and a weak equivalence $\widetilde{F} \rightarrow F$, where everything in sight should be enriched as always.

\subsubsection{Free presheaves on $\Dk$}
Let $\Dk^\delta$ denote\footnote{The symbol $\delta$ stands for `discrete'.} the category with the same objects as $\Dk$ but only identity morphisms. Define $\PSh(\Dk^{\delta})$ to be the category of  contravariant functors from $\Dk^\delta$ to $\sS$ and consider the following free-forgetful adjunction
\begin{equation}
L: \PSh(\Dk^{\delta}) \leftrightarrows \PSh(\Dk) : U
\end{equation}
where $U$ is the obvious forgetful functor and $L$ is a left adjoint to $U$. In other words, $L(G)$ is the (enriched) left Kan extension of a presheaf $G \in \PSh(\Dk^{\delta})$ along the inclusion $i : \Dk^{\delta} \hookrightarrow \Dk$. More concretely, 
\[
 L(G) 
 =\Hom_{\Dk}(-,i) \otimes_{\Dk^\delta} G 
= \coprod_{V \in \Dk} \emb(-,V) \times G(V)
\]
\quad Also note that the free-forgetful adjunction $(L,U)$ is enriched so, in particular, we obtain homeomorphisms
\begin{equation}\label{adj_free}
\Hom_{\PSh(\Dk)}(L(G), F) \cong \Hom_{\PSh(\Dk^{\delta})}(G, U(F)) = \prod_{V \in \Dk} \Hom_{\sS}\bigl(G(V), F(V)\bigr)
\end{equation}
which are $\sS$-natural in $F$ and $G$.

\subsubsection{Cotriple resolution} Associated to the free-forgetful adjunction we construct a simplicial object in $\PSh(\Dk)$, usually called the cotriple resolution. For $G$ in $\PSh(\Dk)$, let $\sL(G)_{\bullet}$ be the simplicial object with $n$-simplicies given by
\[
\sL(G)_{n} := (LU)^{n+1}(G) \in \PSh(\Dk)
\]
i.e.
\[
\coprod_{V_0, \dots, V_n \in \Dk} \emb(-,V_0) \times \dots \times \emb(V_{n-1}, V_{n}) \times G(V_n)
\]
\quad Note that $\sL(G)_{\bullet}$ is naturally augmented via the map $LU(G) \rightarrow G$ given by the composition (remember $G$ is contravariant)
\[
\begin{array}{ccc}
\emb(W,V) \times G(V) & \longrightarrow & G(W) \\
(f, x) & \longmapsto & G(f) (x)
\end{array}
\]
\quad Finally, we define $|G| \in \PSh(\Dk)$ to be the geometric realisation of $\sL(G)_{\bullet}$,
\[
|G| := | \sL(G)_{\bullet} |
\]

\begin{prop}\label{realisation_cof} The presheaf $|G|$ is a cofibrant replacement of $G$ in $\PSh(\Dk)$.
\end{prop}
\begin{proof}
The natural map $|G| \rightarrow G$ is a weak equivalence by general considerations of cotriple resolutions. Moreover, $|G|$ is cofibrant since $\sL(G)_{\bullet}$ is Reedy cofibrant and geometric realisation preserves cofibrations. 
\end{proof}

\quad Hence, for presheaves $F$ and $G$ in $\PSh(\Dk)$, $\RR \Hom_{\PSh(\Dk)}(G, F)$ is weakly equivalent to $\Hom_{\PSh(\Dk)}(|G|, F)$. Then, 
$
\Hom_{\PSh(\Dk)}(|G|, F) \simeq \textup{Tot} \, \Hom_{\PSh(\Dk)}(\sL(G)_{\bullet},F) 
$.
\begin{thm}\label{tot} Let $F$ be a presheaf. Then $\sT_k F(M)$ is weakly equivalent to the totalisation of the cosimplicial object whose space of $0$-simplicies is
\[
\prod_{V \in \Dk} \Hom_{\sS}(\emb(V,M), F(V))
\]
and, for $n > 0$, whose space of $n$-simplicies is
\[
\prod_{V \in \Dk} \Hom_{\sS}(\sL(M)_{n-1}(V), F(V))
\]

where $\sL(M)_{\bullet} := \sL(\emb(-,M))_{\bullet}$.
\end{thm}

\begin{proof}
By the previous proposition we know
\[
\sT_k F (M) \simeq \textup{Tot}\,  \Hom_{\PSh(\Dk)}(\sL(M)_{\bullet}, F)
\]
The result follows by applying the adjunction (\ref{adj_free}).
\end{proof}

\bibliography{biblio}
\bibliographystyle{amsalpha}

\end{document}